\newcommand\numberthis{\addtocounter{equation}{1}\tag{\theequation}}
\numberwithin{equation}{section}
\theoremstyle{plain}
\newtheorem{thm}{Theorem}[section]
\newtheorem*{thm*}{Theorem}
\newtheorem{lem}[thm]{Lemma}
\newtheorem*{cor*}{Corollary}
\theoremstyle{definition}
\newtheorem{conj}{Conjecture}[section]
\theoremstyle{remark}
\newtheorem*{rem}{Remark}
\newcommand{\BigO}[1]{\ensuremath{\operatorname{O}\left(#1\right)}}
\def\pmod#1{\allowbreak\mkern10mu({\operator@font mod}\,\,#1)} 
\begin{document}
\subjclass[2010]{11M06, 11R18, 11N37}
\keywords{Ramanujan Sums, Number Fields, Dedekind Zeta function, Lindel\"of Hypothesis}

\title{Moments of Averages of Ramanujan Sums over Number Fields}

\author{Sneha Chaubey}
\email{sneha@iiitd.ac.in}

\author{Shivani Goel}
\email{shivanig@iiitd.ac.in}

\address[]{Department of Mathematics, IIIT Delhi, New Delhi 110020}

\begin{abstract}
Assuming the generalized Lindel\"of hypothesis, we provide asymptotic formulas for the mean values of the first and second moments of Ramanujan sums over any number field. Additionally, unconditionally, we estimate the second moment of Ramanujan sums over cyclotomic number fields.
\end{abstract}
\maketitle
\section{Introduction and Main results}
For positive integers $q$ and $n$,  Ramanujan \cite{ramanujan1918certain} studied a function defined as follows:
\[c_q(n):=\sum_{\substack{1\le j\le n\\(j,q)=1}}e\left(\frac{ nj}{q}\right)=\sum_{\substack{d|n\\d|q}}d\mu\left(\frac{q}{d}\right).\numberthis\label{rsum}\]
Ramanujan first encountered these sums while exploring trigonometric series representations of normalized arithmetic functions of the form $\sum_{q}a_qc_q(n)$, now known as Ramanujan expansions. These sums \eqref{rsum} have since been recognized as Ramanujan sums. Subsequently, Carmichael \cite{Carmi} established that these sums also possess an orthogonality property.
   Ramanujan and Carmichael's work set the stage for a general theory of Ramanujan sums and Ramanujan expansions.
   Ramanujan sums exhibit deep connections in number theory and arithmetic, including their role in proving Vinogradov's theorem \cite[Chapter 8]{nathanson1996additive}, Waring-type formulas \cite{konvalina1996generalization}, the distribution of rational numbers in short intervals \cite{jutila2007distribution}, equipartition modulo odd integers \cite{balandraud2007application}, the large sieve inequality \cite{ramare2007eigenvalues}, and various other branches of mathematics. For more recent developments in the direction of Ramanujan expansions, we refer \cite{chaubey2023hardy, HD, AD, LR, schwarz1988ramanujan, SS, AW, EW}. 
   


  The main objective of this article is to investigate the moments associated with Ramanujan sums defined over number fields.
  For number field $\mathbb{Q}$, this problem has been studied before in \cite{MR2869206} and \cite{MR3600410}. 
For a number field $\mathbb{K}$, denote $\mathcal{O}_{\mathbb{K}}$ its ring of integers. If $\mathcal{I}$ and $\mathcal{J}$ are non-zero integral ideals within $\mathcal{O}_{\mathbb{K}}$, we define Ramanujan sums over number fields as follows:
        \[C_{\mathcal{J}}({\mathcal{I}}):=\sum_{\substack{\mathcal{I}_1|\mathcal{J}\\\mathcal{I}_1|\mathcal{I}}}\mathcal{N}(\mathcal{I}_1)\mu\left(\frac{\mathcal{J}}{\mathcal{I}_1}\right).\numberthis\label{one}\]
         Here, $\mathcal{N}(\mathcal{I})$ is the norm of $\mathcal{I}$ and $\mu(\mathcal{I})$ is the generalized M\"obius function defined as\[\mu(\mathcal{I}):=\left\{\begin{array}{ll}
       (-1)^r  & \mbox{if } \mathcal{I}\ \text{is a product of r distinct prime ideals},  \\
       0  &  \mbox{if } \text{there exists a prime ideal}\ \mathcal{P} \ \text{of}\ \mathcal{O}_{\mathbb{K}}\  \text{such that} \ \mathcal{P}^2|\mathcal{I}.
    \end{array}\right.\]
    Nowak \cite{Nowak12} showed that if $\mathbb{K}$ is a fixed quadratic number field, and $y>x^{\delta}$, where $\delta>\frac{1973}{820}=2.40609\dots$, then 
\[\sum_{0<\mathcal{N}(\mathcal{I})\le y}\sum_{0<\mathcal{N}(\mathcal{J})\le x}C_{\mathcal{J}}({\mathcal{I}})\sim \rho_{\mathbb{K}} y,\numberthis\label{nowak}\]
\[\rho_{\mathbb{K}}=\lim_{t\rightarrow\infty}\frac{1}{t}\#\{\text{integral ideals }  \mathcal{I} \text{ in } O_{\mathbb{K}}: 0< N(\mathcal{I})\le t\}.\]
He also has a result with an explicit error term. In \cite{nowak13}, the investigation focused on the first moment concerning Gaussian integers. Subsequently, Nowak's result was improved in \cite{zhai2021average} for $\delta>2.3235\dots$. For cubic number number fields, the first moment is derived in \cite{ma2021average}. In this study, the authors obtained an asymptotic formula as in \eqref{nowak}, for $y>x^{11/4}$. Moreover, when considering the first moment of Ramanujan sums over general number fields, the only known result is attributed to Fujisawa \cite{MR3332952}. He proved that  for  any $\delta>\frac{2-\alpha}{1-\alpha}$, where $\alpha\in [0,1)$, with condition $ y\gg x^{\delta}$, and $y\to\infty$, \[\sum_{0<\mathcal{N}(\mathcal{I})\le y}\sum_{0<\mathcal{N}(\mathcal{J})\le x}C_{\mathcal{J}}({\mathcal{I}})= \rho_{\mathbb{K}}y+o(y).\numberthis\label{fujisawa}\]
Landau \cite{landau1949einfuhrung} provided an estimate for the value of $\alpha$, which is determined to be $(n-1)/(n+1)$, with $n$ representing the degree of $\mathbb{K}$ over $\mathbb{Q}$.

It is evident that Fujisawa's finding exhibits limitations and does not hold for small values of $y$. Consequently, we estimate the first moment in cases where $y>x^2$, under the Generalized Lindel\"of hypothesis (GLH). Our result also enhances Nowak and Zhai's outcomes for quadratic and cubic number fields, respectively. We proceed by first stating the GLH.
\begin{conj}[Generalized Lindel\"of hypothesis]\label{lindelofconj}
       Let $\zeta_{\mathbb{K}}(s)$ be the Dedekind zeta function over the number field $\mathbb{K}$, then for any $\epsilon>0$, we have 
       \[\left|\zeta_{\mathbb{K}}\left(\frac{1}{2}+it\right)\right|\ll |t|^{\epsilon}.\]
    \end{conj}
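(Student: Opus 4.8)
The statement labelled Conjecture \ref{lindelofconj} is the Generalized Lindel\"of Hypothesis for the Dedekind zeta function $\zeta_{\mathbb{K}}$, which this paper adopts as a working hypothesis rather than establishes; it is one of the central open problems of analytic number theory, so an honest ``proof proposal'' must be a survey of the available lines of attack together with a frank account of where they stall. The natural starting point is the approximate functional equation for $\zeta_{\mathbb{K}}$, obtained from the completed functional equation $\xi_{\mathbb{K}}(s)=\xi_{\mathbb{K}}(1-s)$ together with Stirling asymptotics for the $n$ gamma factors, where $n=[\mathbb{K}:\mathbb{Q}]$. On the critical line this writes $\zeta_{\mathbb{K}}(\tfrac12+it)$ as a main Dirichlet sum of effective length $X\asymp|t|^{n/2}$, namely
\[\zeta_{\mathbb{K}}\!\left(\tfrac12+it\right)=\sum_{\mathcal{N}(\mathcal{I})\le X}\mathcal{N}(\mathcal{I})^{-1/2-it}+(\text{dual sum})+(\text{small error}),\]
so the whole problem reduces to exhibiting cancellation in the Dirichlet polynomials $\sum_{\mathcal{N}(\mathcal{I})\le X}a_{\mathcal{I}}\,\mathcal{N}(\mathcal{I})^{-it}$, where $a_{\mathcal{I}}$ counts ideals of a given norm.

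I would then pursue the three standard ways to extract this cancellation, and the point is that each falls short. The cheap route estimates the sum by partial summation against the ideal-counting function $\#\{\mathcal{I}:\mathcal{N}(\mathcal{I})\le t\}=\rho_{\mathbb{K}}t+O(t^{\alpha})$, with Landau's exponent $\alpha=(n-1)/(n+1)$ recalled in the excerpt; this yields only the convexity bound $|\zeta_{\mathbb{K}}(\tfrac12+it)|\ll|t|^{n/4+\epsilon}$, very far from the target exponent $0$. The moment route seeks sharp upper bounds $\int_0^T|\zeta_{\mathbb{K}}(\tfrac12+it)|^{2k}\,dt\ll T^{1+\epsilon}$ for every $k$, which is in fact \emph{equivalent} to the Lindel\"of bound; this recasts the problem in terms of shifted convolution and higher divisor-type correlations of the $a_{\mathcal{I}}$, but controlling these for large $k$ is as hard as Lindel\"of itself. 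The subconvexity route feeds the Dirichlet polynomial into exponential-sum machinery (van der Corput/Vinogradov), or, when $\mathbb{K}/\mathbb{Q}$ is abelian, factors $\zeta_{\mathbb{K}}$ into Hecke $L$-functions and applies $GL_2$ or $\delta$-method subconvexity; over $\mathbb{Q}$ this gives the classical $|t|^{1/6}$ bound, and over quadratic fields one exploits $\zeta_{\mathbb{K}}=\zeta\cdot L(\cdot,\chi)$. In every case, however, these tools deliver a \emph{fixed positive} exponent, never the arbitrary $\epsilon$ demanded by Lindel\"of.

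The cleanest conditional statement, and the one I would foreground, is that Conjecture \ref{lindelofconj} is implied by the Generalized Riemann Hypothesis for $\zeta_{\mathbb{K}}$. Assuming all nontrivial zeros lie on $\Re s=\tfrac12$, a Borel--Carath\'eodory and Hadamard three-circles argument applied to $\log\zeta_{\mathbb{K}}$ in a thin strip just to the right of the critical line gives
\[\log\bigl|\zeta_{\mathbb{K}}\!\left(\tfrac12+it\right)\bigr|\ll \frac{\log|t|}{\log\log|t|},\]
which is even stronger than $\ll|t|^{\epsilon}$. Thus the honest status of the statement is that it is a theorem \emph{conditional on} GRH, and unconditionally it is open.

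The main obstacle is therefore structural rather than technical: the required near-square-root cancellation $\sum_{\mathcal{N}(\mathcal{I})\le X}a_{\mathcal{I}}\,\mathcal{N}(\mathcal{I})^{-it}\ll X^{1/2+\epsilon}|t|^{\epsilon}$, uniform across the full range $X\asymp|t|^{n/2}$, \emph{is} the content of Lindel\"of, and no unconditional method is known that produces an $\epsilon$-power saving over the whole range rather than stalling at a fixed positive exponent. For this reason the present paper, like essentially all work in the area, takes Conjecture \ref{lindelofconj} as a hypothesis and derives its moment asymptotics from it, rather than proving it.
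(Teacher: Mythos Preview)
Your assessment is correct: the paper states Conjecture~\ref{lindelofconj} precisely as a conjecture and offers no proof, using it only as a standing hypothesis for Theorems~\ref{theorem1} and~\ref{theorem2}. There is therefore nothing to compare your proposal against, and your recognition that the statement is an open problem---together with your observation that it follows from GRH---is exactly the right response.
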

  Following this, we proceed to present our results.
    \begin{thm}\label{theorem1}
Let $\mathbb{K}$ be a number field, then under GLH and for any  $\epsilon>0$ if $y>x^{2}$, we have
\[ \sum_{0<\mathcal{N}(\mathcal{I})\le y}\sum_{0<\mathcal{N}(\mathcal{J})\le x}C_{\mathcal{J}}({\mathcal{I}})=\rho_{\mathbb{K}} y+\BigO{xy^{1/2+\epsilon}\log x}.\]
\end{thm}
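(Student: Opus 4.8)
The plan is to convert the double sum to a single one by Dirichlet convolution, peel off the main term by an elementary identity, and estimate the remaining error by a double Mellin inversion in which the generalized Lindel\"of hypothesis controls $\zeta_{\mathbb{K}}$ on, and just off, the critical line. First I would interchange the order of summation: writing $\mathcal{J}=\mathcal{I}_1\mathcal{I}_2$ with $\mathcal{I}_1$ the common divisor and $\mathcal{I}=\mathcal{I}_1\mathcal{I}_3$, one gets
\[\sum_{0<\mathcal{N}(\mathcal{I})\le y}\sum_{0<\mathcal{N}(\mathcal{J})\le x}C_{\mathcal{J}}(\mathcal{I})=\sum_{0<\mathcal{N}(\mathcal{I}_1)\le x}\mathcal{N}(\mathcal{I}_1)\,M_{\mathbb{K}}\!\Big(\tfrac{x}{\mathcal{N}(\mathcal{I}_1)}\Big)A_{\mathbb{K}}\!\Big(\tfrac{y}{\mathcal{N}(\mathcal{I}_1)}\Big),\]
where $M_{\mathbb{K}}(t)=\sum_{0<\mathcal{N}(\mathcal{I})\le t}\mu(\mathcal{I})$ and $A_{\mathbb{K}}(t)=\#\{\mathcal{I}:0<\mathcal{N}(\mathcal{I})\le t\}$. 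Splitting $A_{\mathbb{K}}(t)=\rho_{\mathbb{K}}t+\Delta_{\mathbb{K}}(t)$, the first piece contributes $\rho_{\mathbb{K}}y\sum_{0<\mathcal{N}(\mathcal{I}_1)\le x}M_{\mathbb{K}}(x/\mathcal{N}(\mathcal{I}_1))=\rho_{\mathbb{K}}y$, since $\sum_{0<\mathcal{N}(\mathcal{I}_1)\le x}M_{\mathbb{K}}(x/\mathcal{N}(\mathcal{I}_1))=\sum_{0<\mathcal{N}(\mathcal{K})\le x}\sum_{\mathcal{D}\mid\mathcal{K}}\mu(\mathcal{D})=1$. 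It then remains to bound
\[E:=\sum_{0<\mathcal{N}(\mathcal{I}_1)\le x}\mathcal{N}(\mathcal{I}_1)\,M_{\mathbb{K}}\!\Big(\tfrac{x}{\mathcal{N}(\mathcal{I}_1)}\Big)\Delta_{\mathbb{K}}\!\Big(\tfrac{y}{\mathcal{N}(\mathcal{I}_1)}\Big)\ll xy^{1/2+\epsilon}\log x .\]

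Next I would write $\Delta_{\mathbb{K}}$ by truncated Perron on $\Re s=\tfrac12$, $\Delta_{\mathbb{K}}(v)=\frac{1}{2\pi i}\int_{1/2-iT}^{1/2+iT}\zeta_{\mathbb{K}}(s)v^{s}\,\tfrac{ds}{s}+O(v^{1+\epsilon}/T)$, taking $T$ a little above $y^{1/2}$, so that (using only $M_{\mathbb{K}}(t)\ll t$) the error term contributes $\ll xy^{1/2+\epsilon}$ to $E$. Representing $M_{\mathbb{K}}$ similarly by Perron with $\Re w=c>1$ and summing $\sum_{\mathcal{I}_1}\mathcal{N}(\mathcal{I}_1)^{1-s-w}=\zeta_{\mathbb{K}}(s+w-1)$, one reaches — up to admissible Perron‑truncation errors —
\[E=\frac{1}{(2\pi i)^{2}}\int_{(1/2)}\int_{(c)}\frac{\zeta_{\mathbb{K}}(s)\,\zeta_{\mathbb{K}}(s+w-1)}{\zeta_{\mathbb{K}}(w)}\;\frac{x^{w}y^{s}}{s\,w}\,dw\,ds,\qquad c>\tfrac32 .\]
Now shift the $w$‑contour to $\Re w=1$. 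The only singularity crossed is the simple pole of $\zeta_{\mathbb{K}}(s+w-1)$ at $w=2-s$ (the pole of $\zeta_{\mathbb{K}}$ at $w=1$ is a zero of $1/\zeta_{\mathbb{K}}$, hence harmless); its residue, integrated in $s$, equals
\[\mathcal{J}:=\frac{\rho_{\mathbb{K}}x^{2}}{2\pi i}\int_{(1/2)}\frac{\zeta_{\mathbb{K}}(s)}{\zeta_{\mathbb{K}}(2-s)}\;\frac{(y/x)^{s}}{s(2-s)}\,ds .\]
On the new line $\Re w=1$ one has $\Re(s+w-1)=\tfrac12$, so GLH gives $\zeta_{\mathbb{K}}(s+w-1)\ll|s+w-1|^{\epsilon}$ and $\zeta_{\mathbb{K}}(s)\ll|s|^{\epsilon}$, while $1/\zeta_{\mathbb{K}}(1+it)\ll(\log(|t|+2))^{A}$ by the classical non‑vanishing estimate; since $|x^{w}|=x$ and $|y^{s}|=y^{1/2}$ there, the leftover double integral is $\ll xy^{1/2+\epsilon}$.

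The hard part will be bounding $\mathcal{J}$. Here I would push the $s$‑contour to the left: crossing $s=0$ yields a residue of size $O(x^{2})$, which is $O(xy^{1/2})$ — and this is exactly the step that uses the hypothesis $y>x^{2}$. For the shifted integral, $(y/x)^{s}$ decays as $\Re s$ decreases (as $y>x$), whereas for $\Re s<\tfrac12$ the functional equation of $\zeta_{\mathbb{K}}$ together with GLH on $\Re s=\tfrac12$ gives only the convexity‑type bound $\zeta_{\mathbb{K}}(s)\ll(1+|t|)^{n(1/2-\Re s)+\epsilon}$ with $n=[\mathbb{K}:\mathbb{Q}]$, while $1/\zeta_{\mathbb{K}}(2-s)$ stays bounded; balancing the contour abscissa against the truncation parameter $T$ and invoking $y>x^{2}$ then yields $\mathcal{J}\ll xy^{1/2+\epsilon}\log x$. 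Reconciling the growth of $\zeta_{\mathbb{K}}$ to the left of the critical line against the decay $(y/x)^{s}$ — which is precisely where the degree of $\mathbb{K}$ enters and why the threshold $x^{2}$ appears — is the crux of the whole argument; combining this bound with the estimates above gives the theorem, the $\log x$ being produced by the Perron‑truncation errors.
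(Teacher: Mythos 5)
Your elementary reduction is correct and, up to the error term, cleaner than the paper's: the identity $\sum_{0<\mathcal{N}(\mathcal{I}_1)\le x}M_{\mathbb{K}}(x/\mathcal{N}(\mathcal{I}_1))=1$ really does extract the main term $\rho_{\mathbb{K}}y$ exactly, the double Mellin representation of $E$ is the right bookkeeping, and the leftover integral on $\Re w=1$, $\Re s=1/2$ is indeed $\ll xy^{1/2+\epsilon}$ under GLH. Note also that your residue term $\mathcal{J}$ is not a new object: after the substitution $s\mapsto 2-s$ it coincides with the integral $I_2$ that appears in the paper's proof (there obtained by applying Perron to the $\mathcal{J}$-sum and feeding the $\mathcal{I}$-sum into Lemma \ref{lemmathree}), so both routes converge on the same critical quantity.

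The genuine gap is that you never actually bound $\mathcal{J}$, and the plan you sketch provably does not close for fields of degree $m\ge 3$ in the whole range $y>x^{2}$. On $\Re s=\sigma_0\le 1/2$ the integrand of $\mathcal{J}$ is $\ll x^{2-\sigma_0}y^{\sigma_0}(1+|t|)^{m(1/2-\sigma_0)+\epsilon-2}$, so the $t$-integral converges only for $\sigma_0>1/2-1/m$; below that threshold the truncation costs a factor $T^{m(1/2-\sigma_0)-1}$, and since the Perron error for $\Delta_{\mathbb{K}}$ forces $T\gg y^{1/2}$ while $y>x^2$, lowering $\sigma_0$ by $d$ multiplies the bound by $(xT^{m}/y)^{d}\ge x^{(m-1)d}$, i.e.\ makes it worse. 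The optimal abscissa is therefore $\sigma_0=1/2-1/m$, which yields $\mathcal{J}\ll x^{3/2+1/m}\,y^{1/2-1/m+\epsilon}$, and this is $\le xy^{1/2+\epsilon}$ only when $y\ge x^{m/2+1}$. For $m=1,2$ that condition is exactly $y\ge x^{2}$ and your argument is complete; for cubic and higher-degree fields it leaves the range $x^{2}<y<x^{m/2+1}$ uncovered. This is not an artifact of your contour choice: pointwise GLH bounds combined with the $|t|^{-2}$ kernel give only $x^{3/2}y^{1/2}$ on the half-line, and the same $x^{3/2}y^{1/2}$ barrier reappears if one bounds $E$ by Cauchy--Schwarz using $M_{\mathbb{K}}(t),\Delta_{\mathbb{K}}(t)\ll t^{1/2+\epsilon}$. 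Closing it requires some genuine additional cancellation in $\zeta_{\mathbb{K}}(s)/\zeta_{\mathbb{K}}(2-s)$ (for instance expanding $1/\zeta_{\mathbb{K}}(2-s)$ into its Dirichlet series and using $\sum_{\mathcal{I}}\mu(\mathcal{I})\mathcal{N}(\mathcal{I})^{-1}=0$, or a mean-value estimate on the critical line), and your proposal supplies none; asserting that ``balancing the abscissa against $T$ yields $\mathcal{J}\ll xy^{1/2+\epsilon}\log x$'' is precisely the step that fails.
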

In a recent study by the authors \cite{CG}, the distribution of Ramanujan sums through the second moment was investigated, specifically focusing on quadratic and cubic number fields. To be more precise, we showed that for any arbitrary small $\epsilon>0$ and $y\le x^{k_1}$,
\begin{align*}
    \sum_{0<\mathcal{N}(\mathcal{I})\le y} \left(\sum_{0<\mathcal{N}(\mathcal{J})\le x}C_{\mathcal{J}}({\mathcal{I}})\right)^2&=\frac{\rho_{\mathbb{K}}^2\zeta_{\mathbb{K}}(0)}{4\zeta_{\mathbb{K}}(2)^2 }x^4+E(x,y),
\end{align*}
 for $x^{k_1}\le y <x^{k_2}$
   \begin{align*}
        \sum_{0<\mathcal{N}(\mathcal{I})\le y} \left(\sum_{0<\mathcal{N}(\mathcal{J})\le x}C_{\mathcal{J}}({\mathcal{I}})\right)^2&= \frac{\rho_{\mathbb{K}}^2}{2\zeta_{\mathbb{K}}(2)}yx^2+\frac{\rho_{\mathbb{K}}^2\zeta_{\mathbb{K}}(0)}{4\zeta_{\mathbb{K}}(2)^2 }x^4+E(x,y),
   \end{align*}
   and for $y\ge x^{k_2} $
    \begin{align*}
        \sum_{0<\mathcal{N}(\mathcal{I})\le y} \left(\sum_{0<\mathcal{N}(\mathcal{J})\le x}C_{\mathcal{J}}({\mathcal{I}})\right)^2&= \frac{\rho_{\mathbb{K}}^2}{2\zeta_{\mathbb{K}}(2)}yx^2+E(x,y).
   \end{align*}
   In the case of a quadratic number field, we have specific values for the parameters: $k_1=11/9-\epsilon$ and $k_2=36/17-\epsilon$. Additionally, the error term $E(x,y)$ satisfies the bound $E(x,y)\ll x^{47/18+\epsilon}y^{1/2}\log^{12}x+x^2y^{17/18}\log^{24}x+yx^{{5}/{3}+\epsilon}\log^7x$. For a cubic number field, the parameters are different, with $k_1=237/196-\epsilon$ and $k_2=98/45-\epsilon$, and the corresponding error term is bounded as follows: $E(x,y)\ll x^{1021/392+\epsilon}y^{1/2}\log ^{20}x+x^2y^{45/49+\epsilon}\log^{3}x+yx^{25/14+\epsilon}\log^{10}x$. Furthermore, in the same paper, the authors also explored moments for a broader category known as Pr\"ufer domains. In here, we extend the investigation and prove results for any number field. 
   
\begin{thm}\label{theorem2}
For a number field $\mathbb{K}$, and $\epsilon>0$, under GLH, we have:

For  $y<x^{5/2}$, 
  \begin{align*}
        \sum_{0<\mathcal{N}(\mathcal{I})\le y} \left(\sum_{0<\mathcal{N}(\mathcal{J})\le x}C_{\mathcal{J}}({\mathcal{I}})\right)^2&= \frac{\rho_{\mathbb{K}}^2}{\zeta_{\mathbb{K}}(2)}yx^2+\frac{\rho_{\mathbb{K}}^2\zeta_{\mathbb{K}}(0)}{4\zeta_{\mathbb{K}}(2)^2 }x^4+\BigO{yx^{3/2+\epsilon}},
   \end{align*}
   and for $y\ge x^{5/2}$, we have 
   \begin{align*}
        \sum_{0<\mathcal{N}(\mathcal{I})\le y} \left(\sum_{0<\mathcal{N}(\mathcal{J})\le x}C_{\mathcal{J}}({\mathcal{I}})\right)^2&= \frac{\rho_{\mathbb{K}}^2}{\zeta_{\mathbb{K}}(2)}yx^2+\BigO{yx^{3/2+\epsilon}}.
   \end{align*}
\end{thm}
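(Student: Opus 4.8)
The plan is to expand the square and reduce the second moment to a double sum over pairs of ideals, then evaluate the resulting Dirichlet series using a contour-integration (Perron) argument together with GLH.

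\medskip

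\textbf{Step 1: Opening the square.} Writing $S(\mathcal{I}):=\sum_{0<\mathcal{N}(\mathcal{J})\le x}C_{\mathcal{J}}(\mathcal{I})$, I would first obtain a closed-form (or near-closed-form) expression for $S(\mathcal{I})$ by interchanging the order of summation in \eqref{one}. Since $C_{\mathcal{J}}(\mathcal{I})=\sum_{\mathcal{I}_1\mid\mathcal{I},\ \mathcal{I}_1\mid\mathcal{J}}\mathcal{N}(\mathcal{I}_1)\mu(\mathcal{J}/\mathcal{I}_1)$, summing over $\mathcal{J}$ with $\mathcal{N}(\mathcal{J})\le x$ and setting $\mathcal{J}=\mathcal{I}_1\mathcal{I}_2$ gives $S(\mathcal{I})=\sum_{\mathcal{I}_1\mid\mathcal{I}}\mathcal{N}(\mathcal{I}_1)\sum_{\mathcal{N}(\mathcal{I}_2)\le x/\mathcal{N}(\mathcal{I}_1)}\mu(\mathcal{I}_2)$, i.e. $S(\mathcal{I})=\sum_{\mathcal{I}_1\mid\mathcal{I}}\mathcal{N}(\mathcal{I}_1)\,M_{\mathbb{K}}(x/\mathcal{N}(\mathcal{I}_1))$, where $M_{\mathbb{K}}(t)=\sum_{\mathcal{N}(\mathcal{I}_2)\le t}\mu(\mathcal{I}_2)$ is the Mertens-type sum over $\mathbb{K}$. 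Then
\[
\sum_{0<\mathcal{N}(\mathcal{I})\le y}S(\mathcal{I})^2=\sum_{0<\mathcal{N}(\mathcal{I})\le y}\ \sum_{\mathcal{I}_1\mid\mathcal{I}}\ \sum_{\mathcal{I}_1'\mid\mathcal{I}}\mathcal{N}(\mathcal{I}_1)\mathcal{N}(\mathcal{I}_1')\,M_{\mathbb{K}}\!\left(\tfrac{x}{\mathcal{N}(\mathcal{I}_1)}\right)M_{\mathbb{K}}\!\left(\tfrac{x}{\mathcal{N}(\mathcal{I}_1')}\right).
\]
Interchanging, the inner count of $\mathcal{I}$ with $\mathrm{lcm}(\mathcal{I}_1,\mathcal{I}_1')\mid\mathcal{I}$ and $\mathcal{N}(\mathcal{I})\le y$ is $\rho_{\mathbb{K}}\,y/\mathcal{N}(\mathrm{lcm}(\mathcal{I}_1,\mathcal{I}_1'))+O((y/\mathcal{N}(\mathrm{lcm}))^{\alpha})$ by Landau's ideal-counting estimate. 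This is the structural heart: it turns the problem into a sum over $\mathcal{I}_1,\mathcal{I}_1'$ with weight $\mathcal{N}(\mathcal{I}_1)\mathcal{N}(\mathcal{I}_1')/\mathcal{N}(\mathrm{lcm}(\mathcal{I}_1,\mathcal{I}_1'))$.

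\medskip

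\textbf{Step 2: The main term via Dirichlet series.} The main term is $\rho_{\mathbb{K}}\,y\sum_{\mathcal{I}_1,\mathcal{I}_1'}\frac{\mathcal{N}(\mathcal{I}_1)\mathcal{N}(\mathcal{I}_1')}{\mathcal{N}(\mathrm{lcm}(\mathcal{I}_1,\mathcal{I}_1'))}M_{\mathbb{K}}(x/\mathcal{N}(\mathcal{I}_1))M_{\mathbb{K}}(x/\mathcal{N}(\mathcal{I}_1'))$. I would represent each $M_{\mathbb{K}}(x/\mathcal{N}(\mathcal{I}_1))$ by Perron's formula as $\frac{1}{2\pi i}\int \frac{1}{\zeta_{\mathbb{K}}(s)}\frac{(x/\mathcal{N}(\mathcal{I}_1))^s}{s}\,ds$, plug in, and carry out the sum over $\mathcal{I}_1,\mathcal{I}_1'$. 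Writing $\mathcal{I}_1=\mathcal{D}\mathcal{A}$, $\mathcal{I}_1'=\mathcal{D}\mathcal{B}$ with $(\mathcal{A},\mathcal{B})=1$ (the standard lcm decomposition), the arithmetic sum factors into an Euler product times $\zeta_{\mathbb{K}}(s_1-1)\zeta_{\mathbb{K}}(s_2-1)$, and one shifts contours to pick up poles. The residues at the relevant points should yield exactly the two main terms $\frac{\rho_{\mathbb{K}}^2}{\zeta_{\mathbb{K}}(2)}yx^2$ and $\frac{\rho_{\mathbb{K}}^2\zeta_{\mathbb{K}}(0)}{4\zeta_{\mathbb{K}}(2)^2}x^4$; the constants $\zeta_{\mathbb{K}}(2)^{-1}$, $\zeta_{\mathbb{K}}(0)$ arise precisely as $\zeta_{\mathbb{K}}(s)^{-1}$ and $\zeta_{\mathbb{K}}(s)^{-1}$ factors evaluated at $s=2$ and $s=0$. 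The $x^4$ term should come from the "diagonal-heavy" region where $\mathcal{N}(\mathcal{I}_1),\mathcal{N}(\mathcal{I}_1')$ are small (so $M_{\mathbb{K}}(x/\mathcal{N}(\mathcal{I}_1))$ is of order $x/\mathcal{N}(\mathcal{I}_1)$ times its mean, contributing $x\cdot x$ from the two Mertens factors together with an extra $x$ each from the weights $\mathcal{N}(\mathcal{I}_1)\mathcal{N}(\mathcal{I}_1')$), and it is present only when $y$ is not too large relative to $x$ — this is the source of the threshold $x^{5/2}$. When $y\ge x^{5/2}$ the $x^4$ term is absorbed into the error $O(yx^{3/2+\epsilon})$ and drops out of the statement.

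\medskip

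\textbf{Step 3: Error terms and GLH.} The error contributions come from (a) the Landau error $O((y/\mathcal{N}(\mathrm{lcm}))^{\alpha})$ in Step 1 summed against the weights and the two $M_{\mathbb{K}}$ factors, and (b) the contour-shift error in Step 2 when pushing the $s_1,s_2$ contours to $\mathrm{Re}=1/2+\epsilon$. On this line GLH gives $|\zeta_{\mathbb{K}}(1/2+it)^{-1}|\ll |t|^{\epsilon}$ after the standard zero-free-region-free argument (one uses GLH both to bound $\zeta_{\mathbb{K}}$ from below away from $s=1$ and to control $1/\zeta_{\mathbb{K}}$; more precisely GLH implies $1/\zeta_{\mathbb{K}}(s)\ll |t|^{\epsilon}$ for $\mathrm{Re}(s)\ge 1/2+\epsilon$), so that $M_{\mathbb{K}}(t)\ll t^{1/2+\epsilon}$. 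Feeding this bound into both the Step-1 error and the Step-2 tails and optimizing should give the claimed $O(yx^{3/2+\epsilon})$: roughly, the leading error is $\sum_{\mathcal{N}(\mathcal{I}_1),\mathcal{N}(\mathcal{I}_1')\le x}\frac{\mathcal{N}(\mathcal{I}_1)\mathcal{N}(\mathcal{I}_1')}{\mathcal{N}(\mathrm{lcm})}\cdot (x/\mathcal{N}(\mathcal{I}_1))^{1/2+\epsilon}(x/\mathcal{N}(\mathcal{I}_1'))^{1/2+\epsilon}\cdot (y/\mathcal{N}(\mathrm{lcm}))^{\alpha}$ together with the honest $y$-main-term error, and the dominant balance produces $yx^{3/2+\epsilon}$.

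\medskip

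\textbf{Main obstacle.} The hardest part is the bookkeeping in Step 2: correctly identifying \emph{all} polar contributions of the double contour integral (there are cross terms from the interaction of the two Perron integrals and from the Euler-product factor), verifying that the spurious poles cancel, and checking that the residues assemble into the stated closed-form constants — in particular pinning down the $\zeta_{\mathbb{K}}(0)/(4\zeta_{\mathbb{K}}(2)^2)$ coefficient and confirming that the $x^4$ term genuinely disappears into the error exactly at $y=x^{5/2}$. A secondary technical point is making the truncated-Perron error terms uniform enough (in $\mathcal{I}_1,\mathcal{I}_1'$) that the subsequent summation converges; this requires the usual care with the smoothing/truncation parameter $T$ and is where the $\log x$ savings or losses would normally appear, though here they are swallowed by the $x^{\epsilon}$.
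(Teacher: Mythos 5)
Your route --- open the square, write the inner sum as $S(\mathcal{I})=\sum_{\mathcal{I}_1\mid\mathcal{I}}\mathcal{N}(\mathcal{I}_1)M_{\mathbb{K}}(x/\mathcal{N}(\mathcal{I}_1))$, and count the ideals $\mathcal{I}$ divisible by $\mathrm{lcm}(\mathcal{I}_1,\mathcal{I}_1')$ --- is the elementary Chan--Kumchev-style decomposition, and it is genuinely different from the paper's argument: the paper instead squares the truncated Perron representation $\frac{1}{2\pi i}\int\frac{\sigma_{\mathbb{K},1-s}(\mathcal{I})}{\zeta_{\mathbb{K}}(s)}\frac{x^s}{s}\,ds$, sums over $\mathcal{N}(\mathcal{I})\le y$ using a GLH-conditional mean-value theorem for $\sigma_{\mathbb{K},z_1}(\mathcal{I})\sigma_{\mathbb{K},z_2}(\mathcal{I})$ (Lemma \ref{lemmasix}), and then evaluates the resulting double contour integral, keeping every $1/\zeta_{\mathbb{K}}$ factor on lines $\Re s>1$. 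Your version, however, has two gaps that I believe are fatal as written.

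First, your entire error analysis rests on $M_{\mathbb{K}}(t)\ll t^{1/2+\epsilon}$, which you derive from the claim that GLH gives $1/\zeta_{\mathbb{K}}(s)\ll|t|^{\epsilon}$ for $\Re s\ge 1/2+\epsilon$. GLH does not give this: such a bound would force $\zeta_{\mathbb{K}}$ to have no zeros in $\Re s\ge 1/2+\epsilon$, i.e.\ quasi-GRH, and already over $\mathbb{Q}$ the estimate $M(t)\ll t^{1/2+\epsilon}$ is \emph{equivalent} to RH, which the Lindel\"of hypothesis is not known to imply. The paper's proof is structured precisely so that $1/\zeta_{\mathbb{K}}$ is never bounded inside the critical strip. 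Second, the secondary term $\frac{\rho_{\mathbb{K}}^2\zeta_{\mathbb{K}}(0)}{4\zeta_{\mathbb{K}}(2)^2}x^4$ cannot arise from $\rho_{\mathbb{K}}\,y\sum_{\mathcal{I}_1,\mathcal{I}_1'}(\cdots)$ as you assert, since that expression is $y$ times a function of $x$ alone and so cannot contain a $y$-free term. In your decomposition the $x^4$ term must come from the non-linear part of the ideal count $\#\{\mathcal{N}(\mathcal{I})\le y:\ \mathrm{lcm}(\mathcal{I}_1,\mathcal{I}_1')\mid\mathcal{I}\}$, i.e.\ from the mean of the Landau remainder $E(t)=A(t)-\rho_{\mathbb{K}}t$ (whose average is governed by $\zeta_{\mathbb{K}}(0)$). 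If you bound that remainder in absolute value --- by $t^{\alpha}$ with Landau's $\alpha=(n-1)/(n+1)$, or even by $t^{1/2+\epsilon}$ under GLH --- and sum it against the weights $\mathcal{N}(\mathcal{I}_1)\mathcal{N}(\mathcal{I}_1')|M_{\mathbb{K}}||M_{\mathbb{K}}|$, you get a contribution of rough size $x^{3}y^{1/2+\epsilon}$, which exceeds both $x^4$ and the claimed error $yx^{3/2+\epsilon}$ throughout $x^{2}<y<x^{3}$, in particular at the threshold $y=x^{5/2}$. Beating this requires genuine cancellation in $\sum E(y/\mathcal{N}(\mathrm{lcm}))$, i.e.\ the exponential-sum machinery that is available only for low-degree fields (this is exactly why the authors' earlier paper \cite{CG} obtains weaker thresholds for quadratic and cubic fields). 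So the proposal as it stands does not recover the theorem for a general number field; the double-Perron route is not merely a stylistic alternative.
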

Furthermore, we provide an estimation for cyclotomic number fields without requiring the assumption of the Generalized Lindel\"of hypothesis.
\begin{thm}\label{theorem3}
Let $\mathbb{K} =\mathbb{Q}(\zeta_m)$ be a cyclotomic number field, then  for  $y<x^{2}$
\begin{align*}
   \sum_{0<\mathcal{N}(\mathcal{I})\le y} \left(\sum_{0<\mathcal{N}(\mathcal{J})\le x}C_{\mathcal{J}}({\mathcal{I}})\right)^2&=\frac{\rho_{\mathbb{K}}^2\zeta_{\mathbb{K}}(0)}{4\zeta_{\mathbb{K}}(2)^2 }x^4+\BigO{x^{2-1/4\phi^(m)}y\log^{4\phi(m)+1}x+x^2y^{5/6}\log^{4\phi(m)}x}, 
\end{align*}
and for $y>x^{2}$
  \begin{align*}
        \sum_{0<\mathcal{N}(\mathcal{I})\le y} \left(\sum_{0<\mathcal{N}(\mathcal{J})\le x}C_{\mathcal{J}}({\mathcal{I}})\right)^2&= \frac{\rho_{\mathbb{K}}^2}{2\zeta_{\mathbb{K}}(2)}yx^2+\BigO{x^2y^{5/6}\log^{4\phi(m)}x +x^{5/2-1/2\phi(m)}y^{1/2}\log^{4\phi(m)}x}\\&+\BigO{x^{2-1/4\phi(m)}y\log^{4\phi(m)+1}x}.
   \end{align*}
\end{thm}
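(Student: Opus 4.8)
The plan is to run the argument that proves Theorem~\ref{theorem2}, replacing the Generalized Lindel\"of hypothesis by the factorization $\zeta_{\mathbb{K}}(s)=\prod_{\chi}L(s,\chi)$ available when $\mathbb{K}=\mathbb{Q}(\zeta_m)$ (the product over the $\phi(m)$ Dirichlet characters modulo $m$, each taken to its primitive representative) together with unconditional bounds for the $L(s,\chi)$. First, writing $\mathcal{J}=\mathcal{I}_1\mathcal{B}$ in \eqref{one} one gets
$$\sum_{0<\mathcal{N}(\mathcal{J})\le x}C_{\mathcal{J}}(\mathcal{I})=\sum_{\substack{\mathcal{I}_1\mid\mathcal{I}\\ \mathcal{N}(\mathcal{I}_1)\le x}}\mathcal{N}(\mathcal{I}_1)\,M_{\mathbb{K}}\!\left(\frac{x}{\mathcal{N}(\mathcal{I}_1)}\right),\qquad M_{\mathbb{K}}(t):=\sum_{0<\mathcal{N}(\mathcal{A})\le t}\mu(\mathcal{A}).$$
Squaring, expanding the double divisor sum over $(\mathcal{I}_1,\mathcal{I}_1')$, and bringing the sum over $\mathcal{I}$ last, the innermost sum becomes $\#\{\,\mathcal{N}(\mathcal{I})\le y:\ \lcm(\mathcal{I}_1,\mathcal{I}_1')\mid\mathcal{I}\,\}=\rho_{\mathbb{K}}y/\mathcal{N}(\lcm)+\Delta_{\mathbb{K}}\!\big(y/\mathcal{N}(\lcm)\big)$, where $\Delta_{\mathbb{K}}$ is the remainder in the ideal--counting function. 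Using $\mathcal{N}(\mathcal{I}_1)\mathcal{N}(\mathcal{I}_1')/\mathcal{N}(\lcm)=\mathcal{N}(\gcd)$, the identity $\mathcal{N}(\mathcal{D}_0)=\sum_{\mathcal{D}\mid\mathcal{D}_0}\varphi_{\mathbb{K}}(\mathcal{D})$ for the ideal totient, and the telescoping identity $\sum_{0<\mathcal{N}(\mathcal{C})\le u}M_{\mathbb{K}}(u/\mathcal{N}(\mathcal{C}))=\mathbf{1}[u\ge1]$, the $\rho_{\mathbb{K}}y$--part collapses to $\rho_{\mathbb{K}}y\sum_{0<\mathcal{N}(\mathcal{D})\le x}\varphi_{\mathbb{K}}(\mathcal{D})$ plus, when $y<x^2$, a correction supported on $\mathcal{N}(\lcm)>y$, and there remains a $\Delta_{\mathbb{K}}$--contribution.

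For the main terms: Perron's formula applied to $\zeta_{\mathbb{K}}(s-1)/\zeta_{\mathbb{K}}(s)$ yields $\sum_{0<\mathcal{N}(\mathcal{D})\le x}\varphi_{\mathbb{K}}(\mathcal{D})=\tfrac{\rho_{\mathbb{K}}}{2\zeta_{\mathbb{K}}(2)}x^2+(\text{error})$, hence the term $\tfrac{\rho_{\mathbb{K}}^2}{2\zeta_{\mathbb{K}}(2)}yx^2$; this is the entire main term when $y\ge x^2$, where the $\lcm$--correction is empty. When $y<x^2$ one evaluates the correction $\sum_{\mathcal{N}(\lcm)>y}$ by a further Mellin inversion and a contour shift past the poles at $s=1$ and $s=0$: the residue at $s=1$ cancels the $yx^2$ contribution and the residue at $s=0$ leaves $\tfrac{\rho_{\mathbb{K}}^2\zeta_{\mathbb{K}}(0)}{4\zeta_{\mathbb{K}}(2)^2}x^4$ (which vanishes for all cyclotomic $\mathbb{K}$ with $\phi(m)\ge 4$, so that there the statement is in effect a power--saving upper bound). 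The change of behaviour at $y=x^2$ is precisely where these two terms meet.

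The cyclotomic hypothesis enters in the error estimates. From $1/\zeta_{\mathbb{K}}(s)=\prod_{\chi}L(s,\chi)^{-1}$ one obtains, on vertical lines inside $\tfrac12<\Re s\le1$, the convexity bound $\zeta_{\mathbb{K}}(\sigma+it)\ll_{\epsilon}(1+|t|)^{\phi(m)(1-\sigma)/2+\epsilon}$, and just to the right of $\Re s=1$ the bound $1/\zeta_{\mathbb{K}}(\sigma+it)\ll\log^{\phi(m)}(|t|+2)$; combined with the classical zero--free region for each $L(s,\chi)$ these let one truncate and shift the Perron integrals for $M_{\mathbb{K}}$, for $\sum\varphi_{\mathbb{K}}$, and for the $\lcm$--sum. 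Balancing the truncation height against the degree--$\phi(m)$ growth of $\zeta_{\mathbb{K}}$ produces power savings of the shapes $x^{2-1/(4\phi(m))}$ and $x^{5/2-1/(2\phi(m))}y^{1/2}$, the powers of $\log x$ ($4\phi(m)$, resp.\ $4\phi(m)+1$) being the cumulative cost of $\phi(m)$ reciprocal $L$--factors occurring in each of the two $M_{\mathbb{K}}$'s together with a divisor bound and a Cauchy--Schwarz step. The $\Delta_{\mathbb{K}}$--contribution is treated separately: bounding $M_{\mathbb{K}}(u)\ll u$ turns it into a weighted sum of $|\Delta_{\mathbb{K}}(y/\mathcal{N}(\mathcal{L}))|$ over $\mathcal{N}(\mathcal{L})\le y$, which after a dyadic decomposition and a mean--value estimate for $\Delta_{\mathbb{K}}$ (again via the $L$--function factorization, e.g.\ through the relevant fourth moment) yields the term $x^2y^{5/6}\log^{4\phi(m)}x$. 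Collecting these over the two ranges gives the stated bounds.

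The crux is this last step. Under GLH one would push every contour to $\Re s=\tfrac12$ and invoke $1/\zeta_{\mathbb{K}}(\tfrac12+it)\ll|t|^{\epsilon}$; unconditionally one is confined to vertical lines inside the critical strip with only convexity--type bounds, and---crucially---every estimate must be uniform in the conductor $m$, hence in the degree $\phi(m)$, which is what forces $\phi(m)$ to appear explicitly both in the saving exponents and in the powers of $\log x$. Choosing the truncation heights so as to extract a genuine power saving in $x$ (rather than merely $\exp(-c\sqrt{\log x})$) while still staying below the $x^4$ and $yx^2$ main terms is the delicate calculation, and it is what pins down the exponents $1-\tfrac{1}{4\phi(m)}$, $1-\tfrac{1}{2\phi(m)}$ and $\tfrac56$.
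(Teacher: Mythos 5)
Your route is genuinely different from the paper's: you propose the classical elementary decomposition of the second moment via $M_{\mathbb{K}}$, the gcd/lcm expansion, the ideal-totient identity, and the ideal-counting remainder $\Delta_{\mathbb{K}}$, whereas the paper simply reruns the proof of Theorem~\ref{theorem2} after replacing the GLH input by the unconditional bounds \eqref{bound1} (obtained from $\zeta_{\mathbb{K}}=\zeta\prod_{\chi\ne\chi_0}L(s,\chi)$, Weyl-type subconvexity for $\zeta$ and $L(s,\chi)$, and Phragm\'en--Lindel\"of), the whole weight being carried by the analogue of Lemma~\ref{lemmasix}, namely Lemma~\ref{lemmasix2}. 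Your identification of the two main terms, the telescoping identity for $M_{\mathbb{K}}$, and the observation that $\zeta_{\mathbb{K}}(0)=0$ once $\phi(m)\ge 4$ are all correct. The problem is that the theorem's entire content is the three explicit error exponents, and your argument does not actually produce them.

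The concrete failure is the $\Delta_{\mathbb{K}}$-contribution. Once you bound $\mathcal{N}(\mathcal{I}_1)M_{\mathbb{K}}(x/\mathcal{N}(\mathcal{I}_1))\ll x$ and take absolute values, you are left with $\gg x^2\sum_{\mathcal{N}(\mathcal{L})\le \min(y,x^2)}\,|\Delta_{\mathbb{K}}(y/\mathcal{N}(\mathcal{L}))|$ (up to divisor-function weights), and the ideals with $\mathcal{N}(\mathcal{L})\asymp y$ alone already contribute on the order of $x^2\cdot y\cdot O(1)=x^2y$, the size of the main term: there $\Delta_{\mathbb{K}}$ is evaluated at bounded arguments, so no mean-value or fourth-moment estimate for $\Delta_{\mathbb{K}}$ can recover a saving --- the loss occurs when you discard the sign of $M_{\mathbb{K}}$, not in estimating $\Delta_{\mathbb{K}}$. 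Moreover, mean-value bounds for $\Delta_{\mathbb{K}}$ in a degree-$\phi(m)$ field degrade as $\phi(m)$ grows, so they could never yield the degree-independent exponent $5/6$; in the paper that exponent has a different origin entirely, namely the Perron truncation $T=y^{1/6}$ in Lemma~\ref{lemmasix2}, which is admissible precisely because of the $t^{\phi(m)(1-\sigma)/3}$ (Weyl-strength) bound in \eqref{bound1}. Relatedly, the bound you quote inside the critical strip, $(1+|t|)^{\phi(m)(1-\sigma)/2+\epsilon}$, is only convexity and is too weak to reach the exponents $2-1/(4\phi(m))$, $5/2-1/(2\phi(m))$ and $5/6$; and the remaining exponents are asserted to emerge from an unspecified ``balancing'' rather than derived. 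As written, the proposal establishes the shape of the main terms but not the error terms that constitute the theorem.
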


\begin{rem}
    Note that using the current best-known bounds of the Dedekind zeta function for any arbitrary cyclotomic number field $\mathbb{Q}(\zeta_m)$, it is not possible to obtain an asymptotic formula like \eqref{nowak} for the first moment.
\end{rem}
\subsection{Organization}
This article is organized as follows. Section \ref{sec2} contains preliminary results required to prove Theorems \ref{theorem1}, \ref{theorem2}, and \ref{theorem3}. In Section \ref{sec3}, we prove key results involving the average of the divisor function and the product of divisor functions over number fields. Section \ref{sec4} contains proof of Theorem \ref{theorem1}.  Section \ref{sec5} contains  proof of Theorem \ref{theorem2}. Finally, in Section \ref{sec6}, we give a brief sketch of the proof of Theorem \ref{theorem3}.  
\subsection{Notations} 
\begin{enumerate}
    \item  We use $\mathbb{Z}$, $\mathbb{Q}$, and $\mathbb{K}$ to denote the set of integers, a set of rational numbers, and a number field of degree $m$, respectively.
    \item We use $\phi$ to denote the Euler totient function, $\mu$  to denote the Mobius function. 
    \item  $\zeta(s)$ is the Riemann zeta function, $\zeta_{\mathbb{K}}(s)$ denotes the Dedekind zeta function corresponding to a number field $\mathbb{K}$, and $L(s,\chi)$ is the Dirichlet L-function. Here $\chi$ is a Dirichlet character.  
          \item $\zeta _{m}$ denotes a  primitive $m$-th root of unity and $\chi_0$ denotes the principal Dirichlet character.
\item  We use the Vinogradov $\ll$ asymptotic notation, and the big oh O(·) and o(·) asymptotic notation. 
\end{enumerate}
\subsection{Acknowledgements}
The authors are grateful to the support from the University Grants Commission, the Department of Higher Education, Government of India [DEC18-434199 to S.G.]; and the Science and Engineering Research Board, Department of Science and Technology, Government of India [SB/S2/RJN-053/2018 to S.C.]. The authors are also grateful to Biswajyoti Saha for suggesting the problem for cyclotomic number fields and valuable discussions on this. 
 
    \section{Preliminaries} \label{sec2}
In this section, we state the results required to prove our main Theorems. 
 Initially, we outline the results related to the Dirichlet series of Ramanujan sums and generalized divisor functions over number fields. These results have been previously demonstrated in  \cite{CG}.
\begin{lem}\cite[Lemma 2.1]{CG}\label{lemmaone}
For a number field $\mathbb{K}$ and for $\Re(s)>1$, one has\[\sum_{\mathcal{J}\subseteq \mathcal{O}_{\mathbb{K}}}\frac{C_{\mathcal{J}}({\mathcal{I}})}{\mathcal{N}(\mathcal{J})^s}=\frac{\sigma_{\mathbb{K},(1-s)}(\mathcal{I})}{\zeta_{\mathbb{K}}(s)},\numberthis\label{two}\]
where $\sigma_{\mathbb{K},(1-s)}(\mathcal{I})=\sum_{\mathcal{I}_1|\mathcal{I}}\mathcal{N}(\mathcal{I}_1)^{1-s}.$
\end{lem}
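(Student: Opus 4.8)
The plan is to recognize the ideal-indexed map $\mathcal{J}\mapsto C_{\mathcal{J}}(\mathcal{I})$ (for fixed $\mathcal{I}$) as a Dirichlet convolution of two elementary functions and then factor the Dirichlet series accordingly. Concretely, fix a non-zero integral ideal $\mathcal{I}$ and set $f(\mathcal{I}_1)=\mathcal{N}(\mathcal{I}_1)$ if $\mathcal{I}_1\mid\mathcal{I}$ and $f(\mathcal{I}_1)=0$ otherwise. Then the divisor-sum formula \eqref{one} reads $C_{\mathcal{J}}(\mathcal{I})=\sum_{\mathcal{I}_1\mid\mathcal{J}}f(\mathcal{I}_1)\,\mu(\mathcal{J}/\mathcal{I}_1)$, i.e. $C_{\bullet}(\mathcal{I})=f*\mu$ in the convolution ring of arithmetic functions on non-zero integral ideals of $\mathcal{O}_{\mathbb{K}}$; this is meaningful because $\mathcal{O}_{\mathbb{K}}$ is a Dedekind domain, so every non-zero ideal factors uniquely into prime ideals.

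First I would settle absolute convergence. Since $|C_{\mathcal{J}}(\mathcal{I})|\le\sum_{\mathcal{I}_1\mid\mathcal{I}}\mathcal{N}(\mathcal{I}_1)$ is bounded by a constant depending only on $\mathcal{I}$, and $\sum_{\mathcal{J}}\mathcal{N}(\mathcal{J})^{-\Re(s)}$ converges for $\Re(s)>1$ (convergence of $\zeta_{\mathbb{K}}$), the series on the left of \eqref{two} converges absolutely in that range. This licenses interchanging the order of summation: substituting $\mathcal{J}=\mathcal{I}_1\mathcal{D}$ — valid by unique factorization, so that for fixed $\mathcal{I}_1$ the condition $\mathcal{I}_1\mid\mathcal{J}$ puts $\mathcal{D}=\mathcal{J}\mathcal{I}_1^{-1}$ in bijection with all non-zero integral ideals — gives
\[\sum_{\mathcal{J}\subseteq\mathcal{O}_{\mathbb{K}}}\frac{C_{\mathcal{J}}(\mathcal{I})}{\mathcal{N}(\mathcal{J})^{s}}=\sum_{\mathcal{I}_1\mid\mathcal{I}}\frac{\mathcal{N}(\mathcal{I}_1)}{\mathcal{N}(\mathcal{I}_1)^{s}}\sum_{\mathcal{D}\subseteq\mathcal{O}_{\mathbb{K}}}\frac{\mu(\mathcal{D})}{\mathcal{N}(\mathcal{D})^{s}},\]
and the outer factor is, by definition, $\sum_{\mathcal{I}_1\mid\mathcal{I}}\mathcal{N}(\mathcal{I}_1)^{1-s}=\sigma_{\mathbb{K},(1-s)}(\mathcal{I})$.

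It then remains to identify $\sum_{\mathcal{D}\subseteq\mathcal{O}_{\mathbb{K}}}\mu(\mathcal{D})\mathcal{N}(\mathcal{D})^{-s}=1/\zeta_{\mathbb{K}}(s)$ for $\Re(s)>1$. I would obtain this from the Euler product $\zeta_{\mathbb{K}}(s)=\prod_{\mathcal{P}}(1-\mathcal{N}(\mathcal{P})^{-s})^{-1}$ over prime ideals, together with the fact that $\mu$ is multiplicative on coprime ideals and vanishes on non-squarefree ideals, so that $\sum_{\mathcal{D}}\mu(\mathcal{D})\mathcal{N}(\mathcal{D})^{-s}=\prod_{\mathcal{P}}(1-\mathcal{N}(\mathcal{P})^{-s})$; multiplying the two Euler products termwise yields $1$. (Equivalently, one can invoke generalized M\"obius inversion over the lattice of ideals directly.) Substituting this back into the displayed identity gives \eqref{two}.

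The only points requiring care are the bookkeeping around unique factorization of ideals — ensuring the convolution structure, the substitution $\mathcal{J}=\mathcal{I}_1\mathcal{D}$, and the Euler products are all legitimate — and the boundedness estimate that licenses the interchange of summation; beyond that the argument is a direct computation, and I do not anticipate any genuine obstacle.
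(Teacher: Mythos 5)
Your proof is correct: it is the standard Dirichlet-convolution argument (write $C_{\bullet}(\mathcal{I})=f*\mu$, justify absolute convergence for $\Re(s)>1$, substitute $\mathcal{J}=\mathcal{I}_1\mathcal{D}$ using unique factorization of ideals, and identify $\sum_{\mathcal{D}}\mu(\mathcal{D})\mathcal{N}(\mathcal{D})^{-s}=1/\zeta_{\mathbb{K}}(s)$ via the Euler product). The paper does not reprove this lemma but cites it from \cite{CG}, where the proof proceeds along essentially the same lines, so there is nothing further to compare.
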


\begin{lem}\cite[Lemma 2.2]{CG}\label{lemmatwo}
For $z\in \mathbb{C}$, and for a number field $\mathbb{K}$,
\[\sum_{\mathcal{I}\subseteq \mathcal{O}_{\mathbb{K}}}\frac{\sigma_{\mathbb{K},z}(\mathcal{I})}{\mathcal{N}(\mathcal{I})^s}=\zeta_{\mathbb{K}}(s)\zeta_{\mathbb{K}}(s-z),\numberthis\label{three}\]
for $\Re(s)>\max(1+\Re(z),1).$
\end{lem}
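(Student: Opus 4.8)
The plan is to recognize the coefficient $\sigma_{\mathbb{K},z}$ as a Dirichlet convolution of two elementary arithmetic functions over ideals, and to identify each factor on the right-hand side as the Dirichlet series attached to one of them. Write $\mathbf{1}(\mathcal{I})=1$ for every nonzero integral ideal $\mathcal{I}$, and set $N^z(\mathcal{I})=\mathcal{N}(\mathcal{I})^z$. Since $\sigma_{\mathbb{K},z}(\mathcal{I})=\sum_{\mathcal{I}_1\mid\mathcal{I}}\mathcal{N}(\mathcal{I}_1)^z$, reindexing the divisor sum by $\mathcal{I}_2=\mathcal{I}/\mathcal{I}_1$ shows
\[\sigma_{\mathbb{K},z}(\mathcal{I})=\sum_{\mathcal{I}_1\mathcal{I}_2=\mathcal{I}}\mathbf{1}(\mathcal{I}_1)\,N^z(\mathcal{I}_2)=(\mathbf{1}*N^z)(\mathcal{I}),\]
where $*$ denotes Dirichlet convolution over the monoid of nonzero integral ideals of $\mathcal{O}_{\mathbb{K}}$. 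The Dedekind zeta function is the Dirichlet series of $\mathbf{1}$, and, writing $\mathcal{N}(\mathcal{I})^{z}\mathcal{N}(\mathcal{I})^{-s}=\mathcal{N}(\mathcal{I})^{-(s-z)}$, the shifted zeta function is the Dirichlet series of $N^z$:
\[\zeta_{\mathbb{K}}(s)=\sum_{\mathcal{I}\subseteq\mathcal{O}_{\mathbb{K}}}\frac{\mathbf{1}(\mathcal{I})}{\mathcal{N}(\mathcal{I})^s},\qquad \zeta_{\mathbb{K}}(s-z)=\sum_{\mathcal{I}\subseteq\mathcal{O}_{\mathbb{K}}}\frac{N^z(\mathcal{I})}{\mathcal{N}(\mathcal{I})^s}.\]

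Next I would pin down the region of absolute convergence so that the product of the two series may be rearranged. The first series converges absolutely for $\Re(s)>1$, and the second for $\Re(s-z)>1$, i.e. $\Re(s)>1+\Re(z)$; hence both converge absolutely precisely when $\Re(s)>\max(1,1+\Re(z))$, which is exactly the stated hypothesis. In this half-plane I may multiply the two Dirichlet series term by term and regroup the contributions according to the ideal $\mathcal{I}=\mathcal{I}_1\mathcal{I}_2$ to which each pair $(\mathcal{I}_1,\mathcal{I}_2)$ contributes. The regrouping is legitimate because absolute convergence permits arbitrary rearrangement, and it rests on two structural facts about $\mathcal{O}_{\mathbb{K}}$: unique factorization of nonzero ideals into prime ideals, so that the convolution is well defined and each $\mathcal{I}$ has only finitely many factorizations, and complete multiplicativity of the ideal norm, $\mathcal{N}(\mathcal{I}_1\mathcal{I}_2)=\mathcal{N}(\mathcal{I}_1)\mathcal{N}(\mathcal{I}_2)$, which guarantees $\mathcal{N}(\mathcal{I})^{-s}=\mathcal{N}(\mathcal{I}_1)^{-s}\mathcal{N}(\mathcal{I}_2)^{-s}$.

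Putting these together gives
\[\zeta_{\mathbb{K}}(s)\,\zeta_{\mathbb{K}}(s-z)=\sum_{\mathcal{I}_1,\mathcal{I}_2}\frac{\mathbf{1}(\mathcal{I}_1)\,N^z(\mathcal{I}_2)}{\mathcal{N}(\mathcal{I}_1\mathcal{I}_2)^s}=\sum_{\mathcal{I}\subseteq\mathcal{O}_{\mathbb{K}}}\frac{(\mathbf{1}*N^z)(\mathcal{I})}{\mathcal{N}(\mathcal{I})^s}=\sum_{\mathcal{I}\subseteq\mathcal{O}_{\mathbb{K}}}\frac{\sigma_{\mathbb{K},z}(\mathcal{I})}{\mathcal{N}(\mathcal{I})^s},\]
which is the asserted identity. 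The only genuine obstacle is the bookkeeping behind this rearrangement, and since it depends solely on absolute convergence together with unique factorization of ideals and norm multiplicativity, it is routine. As an independent check one could argue via Euler products: $\sigma_{\mathbb{K},z}$ is multiplicative, so the left-hand series factors over prime ideals $\mathcal{P}$ with local factor $\sum_{k\ge0}\sigma_{\mathbb{K},z}(\mathcal{P}^k)\mathcal{N}(\mathcal{P})^{-ks}$; evaluating $\sigma_{\mathbb{K},z}(\mathcal{P}^k)=\sum_{j=0}^{k}\mathcal{N}(\mathcal{P})^{jz}$ and summing the resulting double geometric series yields $\bigl(1-\mathcal{N}(\mathcal{P})^{-s}\bigr)^{-1}\bigl(1-\mathcal{N}(\mathcal{P})^{z-s}\bigr)^{-1}$, which is precisely the product of the Euler factors of $\zeta_{\mathbb{K}}(s)$ and $\zeta_{\mathbb{K}}(s-z)$.
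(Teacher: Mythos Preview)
Your argument is correct and is the standard Dirichlet-convolution proof of this identity. The paper does not supply its own proof of this lemma---it simply cites it from \cite{CG}---so there is nothing to compare against, but the approach you give (recognizing $\sigma_{\mathbb{K},z}=\mathbf{1}*N^z$ and invoking norm multiplicativity and absolute convergence, with the Euler-product verification as a sanity check) is exactly the expected one.
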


\begin{lem}\cite[Lemma 2.3]{CG}\label{lemmafive}
For $\Re(s)>\max(1,1+\Re(z_1),1+\Re(z_2),1+\Re(z_1+z_2))$, we have\[\sum_{\mathcal{I}\subseteq \mathcal{O}_{\mathbb{K}}}\frac{\sigma_{\mathbb{K},z_1}(\mathcal{I})\sigma_{\mathbb{K},z_2}(\mathcal{I})}{\mathcal{N}(\mathcal{I})^s}=\frac{\zeta_{\mathbb{K}}(s)\zeta_{\mathbb{K}}(s-z_1)\zeta_{\mathbb{K}}(s-z_2)\zeta_{\mathbb{K}}(s-z_1-z_2)}{\zeta_{\mathbb{K}}(2s-z_1-z_2)}.\numberthis\label{sigm,adirichlet}\]
\end{lem}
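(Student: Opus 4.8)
The plan is to exploit the multiplicativity of the arithmetic function $\mathcal{I}\mapsto \sigma_{\mathbb{K},z_1}(\mathcal{I})\sigma_{\mathbb{K},z_2}(\mathcal{I})$ and factor the Dirichlet series into an Euler product over prime ideals, thereby reducing the identity to a purely local computation at each prime ideal. Since $\mathcal{O}_{\mathbb{K}}$ is a Dedekind domain, every nonzero integral ideal factors uniquely into prime ideals, and the ideal norm is completely multiplicative. Consequently $\sigma_{\mathbb{K},z}(\mathcal{I})=\sum_{\mathcal{I}_1\mid\mathcal{I}}\mathcal{N}(\mathcal{I}_1)^z$, being a divisor sum of a completely multiplicative function, is itself multiplicative: if $\mathcal{I}+\mathcal{J}=\mathcal{O}_{\mathbb{K}}$ then $\sigma_{\mathbb{K},z}(\mathcal{I}\mathcal{J})=\sigma_{\mathbb{K},z}(\mathcal{I})\sigma_{\mathbb{K},z}(\mathcal{J})$. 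Hence the product $\sigma_{\mathbb{K},z_1}\sigma_{\mathbb{K},z_2}$ is multiplicative as well, and in the region of absolute convergence the Dirichlet series factors as
\[\sum_{\mathcal{I}\subseteq\mathcal{O}_{\mathbb{K}}}\frac{\sigma_{\mathbb{K},z_1}(\mathcal{I})\sigma_{\mathbb{K},z_2}(\mathcal{I})}{\mathcal{N}(\mathcal{I})^s}=\prod_{\mathcal{P}}\left(\sum_{k=0}^{\infty}\frac{\sigma_{\mathbb{K},z_1}(\mathcal{P}^k)\sigma_{\mathbb{K},z_2}(\mathcal{P}^k)}{\mathcal{N}(\mathcal{P})^{ks}}\right),\]
the product running over all prime ideals $\mathcal{P}$ of $\mathcal{O}_{\mathbb{K}}$.

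Next I would compute the local factor. Writing $\nu=\mathcal{N}(\mathcal{P})$ and using $\sigma_{\mathbb{K},z}(\mathcal{P}^k)=\sum_{j=0}^{k}\nu^{jz}=(\nu^{(k+1)z}-1)/(\nu^{z}-1)$, the product $\sigma_{\mathbb{K},z_1}(\mathcal{P}^k)\sigma_{\mathbb{K},z_2}(\mathcal{P}^k)$ expands, after clearing denominators, into a combination of the four geometric terms $(\nu^{z_1+z_2})^{k+1}$, $(\nu^{z_1})^{k+1}$, $(\nu^{z_2})^{k+1}$, and $1$. Summing the resulting four geometric series in the variable $X=\nu^{-s}$ and recombining over the common denominator $(1-X)(1-\nu^{z_1}X)(1-\nu^{z_2}X)(1-\nu^{z_1+z_2}X)$, the numerator collapses to $1-\nu^{z_1+z_2}X^2$, the spurious factor $(\nu^{z_1}-1)(\nu^{z_2}-1)$ cancelling. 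This yields the local identity
\[\sum_{k=0}^{\infty}\frac{\sigma_{\mathbb{K},z_1}(\mathcal{P}^k)\sigma_{\mathbb{K},z_2}(\mathcal{P}^k)}{\nu^{ks}}=\frac{1-\nu^{z_1+z_2-2s}}{(1-\nu^{-s})(1-\nu^{z_1-s})(1-\nu^{z_2-s})(1-\nu^{z_1+z_2-s})}.\]
This algebraic simplification is the technical heart of the argument; it is the number-field analogue of Ramanujan's classical identity and is carried out by the partial-fraction computation just indicated (one may cross-check it by matching the coefficients of $X^0$, $X^1$, $X^2$ against $\sigma_{\mathbb{K},z_1}(\mathcal{P}^k)\sigma_{\mathbb{K},z_2}(\mathcal{P}^k)$ for $k=0,1,2$).

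Finally I would reassemble the Euler product. Recalling $\zeta_{\mathbb{K}}(s)=\prod_{\mathcal{P}}(1-\mathcal{N}(\mathcal{P})^{-s})^{-1}$, each of the four denominator factors contributes a Dedekind zeta function, namely $\prod_{\mathcal{P}}(1-\nu^{-s})^{-1}=\zeta_{\mathbb{K}}(s)$ and likewise $\zeta_{\mathbb{K}}(s-z_1)$, $\zeta_{\mathbb{K}}(s-z_2)$, $\zeta_{\mathbb{K}}(s-z_1-z_2)$ from the factors involving $\nu^{z_1-s}$, $\nu^{z_2-s}$, $\nu^{z_1+z_2-s}$, while the numerator product $\prod_{\mathcal{P}}(1-\nu^{z_1+z_2-2s})$ equals $\zeta_{\mathbb{K}}(2s-z_1-z_2)^{-1}$. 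Multiplying these gives exactly the right-hand side of \eqref{sigm,adirichlet}. The only remaining point requiring care is convergence: the hypothesis $\Re(s)>\max(1,1+\Re(z_1),1+\Re(z_2),1+\Re(z_1+z_2))$ guarantees that each of $\zeta_{\mathbb{K}}(s)$, $\zeta_{\mathbb{K}}(s-z_1)$, $\zeta_{\mathbb{K}}(s-z_2)$, $\zeta_{\mathbb{K}}(s-z_1-z_2)$ converges absolutely, which legitimizes both the Euler-product factorization and the termwise manipulation of the geometric series. The main obstacle is thus not conceptual but the bookkeeping in the local numerator simplification; everything else is the standard multiplicative machinery over the Dedekind domain $\mathcal{O}_{\mathbb{K}}$.
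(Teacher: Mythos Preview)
Your proof is correct and is the standard Euler-product argument for the number-field analogue of Ramanujan's identity. Note, however, that the present paper does not actually prove this lemma: it is quoted as \cite[Lemma~2.3]{CG} and stated without proof in the preliminaries, so there is no ``paper's own proof'' to compare against here. Your approach (multiplicativity of $\sigma_{\mathbb{K},z_1}\sigma_{\mathbb{K},z_2}$, reduction to the local factor at each prime ideal, and the geometric-series/partial-fraction computation yielding the numerator $1-\nu^{z_1+z_2-2s}$) is exactly the expected one and almost certainly coincides with the argument in \cite{CG}.
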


Moving forward, we reference two lemmas that will prove valuable in the subsequent section. The first lemma is a Brun-Titchmarsh theorem established by Shiu \cite{shiu1980brun}, which we will utilize to evaluate the partial sums\[\sum_{\substack{\mathcal{I}\subseteq \mathcal{O}_{\mathbb{K}}\\\mathcal{N}(\mathcal{I})=n}}\sigma_{\mathbb{K},z}(\mathcal{I}),\] and 
\[\sum_{\substack{\mathcal{I}\subseteq \mathcal{O}_{\mathbb{K}}\\\mathcal{N}(\mathcal{I})=n}}\sigma_{\mathbb{K},z_1}(\mathcal{I})\sigma_{\mathbb{K},z_2}(\mathcal{I}).\]
In \cite{shiu1980brun}, the author derives the theorem for a larger class $M$ of arithmetic function $f$ which are non-negative and multiplicative, and which satisfy the following conditions:
\begin{enumerate}
    \item For a prime $p$, and integer $l\ge 1$, there exists a positive constant $C_1$ such that \[f(p^l)\le C_1^l,\]
    \item For every $\epsilon>0$, and for $n\ge 1$, there exists a positive constant $C_2=C_2(\epsilon)$ such that \[f(n)\le C_2n^{\epsilon}.\]
\end{enumerate}
\begin{lem}\cite[Theorem 1]{shiu1980brun} \label{averagelemma}
Let $f\in M$, $0<\alpha, \beta<1/2$, and $a,k$ be integers. If $0<a<k$, and  $(a,k)=1$, then as $x\to \infty$
\[\sum_{\substack{x-y<n\le x\\n\equiv a\mod{q}}} f(n)\ll \frac{y}{\phi(q)\log x}\exp{\left(\sum_{\substack{p\le x\\p\not|q}}\frac{f(p)}{p}\right)},\] uniformly in $a,q$, and $y$ provided that $q\le y^{1-\alpha}$, and  $x^{\beta}<y\le x$.
\end{lem}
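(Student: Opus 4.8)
\emph{Proof strategy.} I would adapt Shiu's original argument, which rests on a multiplicative splitting of $n$ into a ``smooth'' part (handled via an Euler product) and a ``rough'' part (handled by an elementary sieve-type estimate). Fix a small parameter $\eta=\eta(\alpha,\beta)>0$, set $z:=x^{\eta}$, and for $n\in(x-y,x]$ write $n=d_ne_n$ with $d_n$ the largest divisor of $n$ all of whose prime factors are at most $z$. Since $e_n\le x$ and every prime dividing $e_n$ exceeds $z=x^{\eta}$, we have $\Omega(e_n)<1/\eta$, so by condition~(1) $f(e_n)\ll_{\eta,C_1}1$; likewise $\sum_{z<p\le x}f(p)/p\ll_{\eta,C_1}1$, so the primes above $z$ contribute only an $O_{\eta,C_1}(1)$ factor to $\exp\!\big(\sum_{p\le x}f(p)/p\big)$. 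It therefore suffices to prove the stated bound with $f$ replaced by the multiplicative function $g(n):=f(d_n)$, which satisfies $g(p^k)=f(p^k)$ for $p\le z$ and $g(p^k)=1$ for $p>z$.

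\emph{The main term.} Writing $g=\mathbf 1*h$, the function $h$ is supported on $z$-smooth integers with $h(p^k)=f(p^k)-f(p^{k-1})$ for $p\le z$, and
\[
\sum_{\substack{x-y<n\le x\\ n\equiv a\pmod q}}g(n)=\sum_{d\ \text{$z$-smooth}}h(d)\,\#\big\{m:\ dm\in(x-y,x],\ dm\equiv a\pmod q\big\}.
\]
Since $(a,q)=1$, only divisors with $(d,q)=1$ contribute, and then the inner count equals $\frac{y}{dq}+O(1)$; the resulting main term is
\[
\frac{y}{q}\sum_{\substack{d\ \text{$z$-smooth}\\(d,q)=1}}\frac{h(d)}{d}
=\frac{y}{q}\prod_{\substack{p\le z\\ p\nmid q}}\Big[\Big(1-\tfrac1p\Big)\sum_{k\ge0}\tfrac{f(p^k)}{p^k}\Big],
\]
using $\sum_{k\ge0}(f(p^k)-f(p^{k-1}))p^{-k}=(1-1/p)\sum_{k\ge0}f(p^k)p^{-k}$. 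By Mertens' theorem the first product over $p$ is $\asymp q/(\phi(q)\log z)$, and, absorbing the terms with $k\ge2$ through $f(p^k)\le C_1^k$, the second is $\ll\exp\!\big(\sum_{p\le z}f(p)/p\big)$; since $\log z\asymp_{\eta}\log x$ this is exactly the asserted bound.

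\emph{The main obstacle.} What remains — and this is precisely where the hypotheses $q\le y^{1-\alpha}$ and $x^{\beta}<y\le x$ are genuinely needed — is the control of the remainder produced by the $O(1)$ term in the inner count. An error of $O(1)$ for every $z$-smooth $d$ would accumulate to $\sum_{d\le x,\ \text{$z$-smooth}}|h(d)|$, which is far larger than the main term, so one first truncates the divisor sum at $d\le D:=x^{\delta}$ with $\delta=\delta(\alpha,\beta)>0$ small: the truncated error is then $\ll D\exp\!\big(\sum_{p\le x}f(p)/p\big)$, which is acceptable because the hypotheses force the main term to be $\gg y^{\alpha}/\log x\ge x^{\alpha\beta}/\log x$, so one takes $\delta<\alpha\beta$. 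The tail $d>D$ must then be estimated without expanding the inner count — e.g.\ by grouping according to the complementary (now small) variable $m$ and invoking an estimate of the same shape at a reduced scale, so that the cancellation carried by the factors $1-1/p$ is preserved. Carrying out this tail estimate, and checking that every implied constant is uniform in $a$, $q$ and $y$ over the full ranges in the statement, is the genuine technical core of Shiu's argument.
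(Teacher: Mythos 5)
The paper offers no proof of this statement: it is quoted verbatim as \cite[Theorem 1]{shiu1980brun}, so the only thing to measure your sketch against is Shiu's original argument, which you are partially reconstructing. The parts you carry out are sound: the reduction to the $z$-smooth part (via $\Omega(e_n)<1/\eta$ and condition (1)), the convolution $g=\mathbf 1*h$, the Euler-product identity, and the Mertens computation correctly produce the asserted bound for the contribution of the divisors $d\le D=x^{\delta}$, and your accounting of why the truncation error $O(D^{1+\epsilon})$ is admissible (using $q\le y^{1-\alpha}$ and $y>x^{\beta}$ to lower-bound the main term) is also correct.

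The gap is that the part you defer is the entire content of the theorem, and the mechanism you propose for it is not the right one. The tail $d>D$ of a \emph{signed} divisor sum $\sum_{d>D}h(d)\,\#\{m:\,dm\in(x-y,x],\ dm\equiv a\ (\mathrm{mod}\ q)\}$ cannot be salvaged by ``grouping according to $m$ and invoking an estimate of the same shape at a reduced scale'': taking absolute values destroys the factors $1-1/p$ and overshoots the main term by a power of $\log x$, while exploiting the sign changes of $h$ over an incomplete sum restricted to a short interval and a progression is exactly the kind of cancellation one cannot establish at this level of generality. Shiu's proof avoids signed sums entirely: he groups the integers $n$ of the interval by their exact $z$-smooth part $a_n=d$, so the inner count is the \emph{nonnegative} quantity $\#\{n\in(x-y,x]:\,n\equiv a\ (\mathrm{mod}\ q),\ d\mid n,\ P^{-}(n/d)>z\}$, and bounds it from above by the fundamental lemma of sieve theory, which supplies the factor $\prod_{p\le z,\ p \not|\, q}(1-1/p)\asymp q/(\phi(q)\log z)$ directly; summing $f(d)/d$ over smooth $d$ then gives $\exp\bigl(\sum_{p\le z}f(p)/p\bigr)$. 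The hypotheses $q\le y^{1-\alpha}$ and $x^{\beta}<y\le x$ enter precisely in this sieve step (one needs $y/(dq)$ to exceed a fixed power of $z$, uniformly). Finally, the integers whose smooth part exceeds $x^{\delta}$ require a separate Rankin-type argument showing they are too sparse to matter; nothing in your sketch addresses that range. As written, the proposal establishes the main term but not the theorem.
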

The second lemma is a Parron-type formula for a sequence of complex numbers.
\begin{lem}\cite[Lemma 2.8]{MR3600410}\label{parronlemma}
Let $0<\lambda_1<\lambda_2<\cdots<\lambda_n\to \infty$ be any sequence of real numbers, and let $\{a_n\}$ be a sequence of complex numbers. Let the Dirichlet series $g(s):=\sum_{n=1}^{\infty}a_n\lambda_{n}^{-s}$ be absolutely convergent for $\sigma_a$. If $\sigma_0>\max(0,\sigma_a)$ and $x>0$, then \[\sum_{\lambda_n\le x}a_n=\frac{1}{2\pi i}\int_{\sigma_0-iT}^{\sigma_0+iT}g(s)\frac{x^s}{s}ds+R,\] where \[R\ll \sum_{\substack{x/2<\lambda_n<2x\\n\ne x}}|a_n|\min\left(1,\frac{x}{T|x-\lambda_n|}\right)+\frac{4^{\sigma_0}+x^{\sigma_0}}{T}\sum_{n=1}^{\infty}\frac{|a_n|}{\lambda_n^{\sigma_0}}.\] 
\end{lem}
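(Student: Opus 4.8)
The plan is to reduce the statement to the classical truncated oscillatory--integral estimate and then repackage its error according to the position of $\lambda_n$ relative to $x$. Throughout, write $\delta(y):=\frac{1}{2\pi i}\int_{\sigma_0-iT}^{\sigma_0+iT}\frac{y^s}{s}\,ds$.

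\textbf{Step 1 (the key integral).} I would first record the building block
\[\delta(y)=\mathbf{1}_{y>1}+\BigO{y^{\sigma_0}\min\left(1,\frac{1}{T|\log y|}\right)}\qquad(y\neq1),\]
together with the separate value $\delta(1)=\tfrac12+\BigO{\sigma_0/T}$. For $y\neq1$ this is proved by shifting the contour: when $y>1$ one pushes the vertical segment to the left, picking up the residue $1$ of $y^s/s$ at $s=0$, while for $0<y<1$ one pushes it to the right and picks up nothing; in either case the horizontal pieces at height $\pm T$ are controlled by $\int y^{\sigma}\,d\sigma\ll y^{\sigma_0}/|\log y|$, giving the factor $1/(T|\log y|)$. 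The complementary bound by $y^{\sigma_0}$ alone (the ``$1$'' in the minimum, needed when $y\to1$) comes from comparing $\delta(y)$ with the full untruncated integral and integrating by parts in the tails $|t|>T$.

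\textbf{Step 2 (insert and interchange).} Since $\sigma_0>\max(0,\sigma_a)$, the series $\sum_n|a_n|\lambda_n^{-\sigma_0}$ converges; on the finite segment $\Re s=\sigma_0,\ |t|\le T$ the integrand $g(s)x^s/s$ is dominated by an integrable function, so Fubini gives
\[\frac{1}{2\pi i}\int_{\sigma_0-iT}^{\sigma_0+iT}g(s)\frac{x^s}{s}\,ds=\sum_{n=1}^{\infty}a_n\,\delta\!\left(\frac{x}{\lambda_n}\right).\]
Applying Step 1 with $y=x/\lambda_n$, the indicators collect to $\sum_{\lambda_n<x}a_n$, and incorporating the endpoint (the case $\lambda_n=x$ is the meaning of the exclusion $n\neq x$; one treats it via the primed-endpoint convention or the hypothesis $x\notin\{\lambda_n\}$) yields $\sum_{\lambda_n\le x}a_n$ as the main term, with
\[R\ll\sum_{n}|a_n|\left(\frac{x}{\lambda_n}\right)^{\sigma_0}\min\left(1,\frac{1}{T|\log(x/\lambda_n)|}\right).\]

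\textbf{Step 3 (split the error).} I would split this sum at $\lambda_n=x/2$ and $\lambda_n=2x$. In the near range $x/2<\lambda_n<2x$ one has $(x/\lambda_n)^{\sigma_0}\le 2^{\sigma_0}\le 4^{\sigma_0}$, and the elementary comparison $|\log(x/\lambda_n)|\asymp|x-\lambda_n|/x$ (valid uniformly for $\tfrac12<x/\lambda_n<2$) converts $1/|\log(x/\lambda_n)|$ into $\ll x/|x-\lambda_n|$, producing the first sum $\sum_{x/2<\lambda_n<2x}|a_n|\min(1,x/(T|x-\lambda_n|))$. In the far range $\lambda_n\le x/2$ or $\lambda_n\ge2x$ one has $|\log(x/\lambda_n)|\ge\log2$, so the minimum is $\ll1/T$ and the contribution is $\ll\frac{1}{T}\sum_n|a_n|(x/\lambda_n)^{\sigma_0}=\frac{x^{\sigma_0}}{T}\sum_n|a_n|\lambda_n^{-\sigma_0}$. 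Collecting the two regimes, and tracking the bounded overhead $(x/\lambda_n)^{\sigma_0}$ (at most $4^{\sigma_0}$ near $x$, exactly $x^{\sigma_0}$ in the tail), gives the asserted form of $R$.

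The main obstacle is Step 1: establishing the oscillatory integral estimate with the correct uniform $\min(1,\cdot)$ behaviour, in particular the bound that remains finite as $y\to1$, since it is precisely this that localises the error to the clean factor $\min(1,x/(T|x-\lambda_n|))$ near $\lambda_n=x$. The remainder is contour-shifting bookkeeping together with the elementary estimate $|\log(x/\lambda_n)|\asymp|x-\lambda_n|/x$. A secondary technical point is the Fubini interchange in Step 2, which rests solely on the absolute convergence of $g$ at $\sigma_0$ and the finiteness of the integration segment.
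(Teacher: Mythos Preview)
The paper does not supply a proof of this lemma; it is quoted verbatim from \cite[Lemma 2.8]{MR3600410} and used as a black box. Your argument is the standard derivation of the truncated Perron formula (as in Davenport or Montgomery--Vaughan): evaluate the discontinuous integral $\delta(y)$ by contour shifting, interchange sum and integral via absolute convergence at $\sigma_0$, then split the resulting error at $\lambda_n\in(x/2,2x)$ versus the tails using $|\log(x/\lambda_n)|\asymp|x-\lambda_n|/x$ near $x$ and $|\log(x/\lambda_n)|\ge\log 2$ far from $x$. This is correct and is exactly the proof one would find in the cited source, so there is nothing to contrast.
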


 Next, recall the Phragm\'en-Lindel\"of principle given by
    \begin{thm}\cite[Theorem 5.53]{Iwaniec}\label{paralinde}
        Let $f(\sigma+it)$ be analytic in the strip $a\le\sigma\le b$ with $f(\sigma+it)\ll\exp({\epsilon |t|})$. If $|f(a + it)|\ll |t|^{c_1}$ and $|f(b + it)|\ll |t|^{c_2}$, then
\[|f(\sigma + it)|\ll |t|^{c(\sigma)},\]
uniformly in $a\le\sigma\le b$, where $c(\sigma)$ is linear in $\sigma$ with $c(a)=c_1$ and $c(b)=c_2$.
    \end{thm}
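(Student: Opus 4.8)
\emph{Proof proposal.} This is the classical Phragm\'en--Lindel\"of convexity principle, and the plan is to reduce it, after two normalising substitutions, to the maximum modulus principle (equivalently, to Hadamard's three--lines theorem). Assume without loss of generality $c_1\ge c_2$, write $\gamma:=(c_1-c_2)/(b-a)\ge 0$, and note that the target exponent is the linear interpolant $c(\sigma)=\frac{c_1(b-\sigma)+c_2(\sigma-a)}{b-a}=c_2+\gamma(b-\sigma)$. First I would fix a large real constant $M$ so that $\Re(s+M)>0$ on the closed strip, take the principal branch of $\log(s+M)$ there, and set
\[
f_1(s):=f(s)\,(s+M)^{\gamma(s-b)},\qquad f_2(s):=f_1(s)\,(s+M)^{-c_2}.
\]
Writing $s+M=|s+M|e^{i\theta}$ with $|\theta|<\pi/2$, one has $|(s+M)^{\gamma(s-b)}|=|s+M|^{\gamma(\sigma-b)}e^{-\gamma t\theta}\le 1$ because $\sigma\le b$ and $t\theta\ge 0$; hence $f_1$ keeps the finite--order growth of $f$, and $f_2$ does too up to the polynomial factor $|s+M|^{-c_2}$. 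On $\sigma=b$ the new factor has modulus $\le 1$, and on $\sigma=a$ it is $\asymp|t|^{-\gamma(b-a)}=|t|^{-(c_1-c_2)}$, so on both boundary lines $|f_2|\ll|t|^{c_2}\cdot|t|^{-c_2}=1$ (for $|t|\ge 1$; for $|t|<1$ continuity gives a bound trivially), with an implied constant that does not depend on $M$.

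The next step is to show $f_2\ll 1$ throughout the strip. For $\delta>0$ and a fixed real $s_0\in(a,b)$ I would look at $f_2(s)\,e^{\delta(s-s_0)^2}$, whose modulus is $\le|f_2(s)|\,e^{\delta(\sigma-s_0)^2}e^{-\delta t^2}$ and therefore tends to $0$ as $|t|\to\infty$, since $e^{-\delta t^2}$ overpowers the hypothesis $f_2\ll\exp(\epsilon|t|)$. Applying the maximum modulus principle on the rectangles $\{a\le\sigma\le b,\ |t|\le T\}$ and letting $T\to\infty$ kills the horizontal edges, so $f_2(s)e^{\delta(s-s_0)^2}$ is bounded in the strip by its supremum on the two vertical lines; dividing by $e^{\delta(s-s_0)^2}$ and letting $\delta\to 0$ (equivalently, running the three--lines theorem on the damped function and passing to the limit) yields $|f_2(\sigma+it)|\ll 1$ uniformly on $a\le\sigma\le b$, again with a constant independent of $M$.

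Finally I would undo the normalisation: $f(s)=f_2(s)\,(s+M)^{c_2-\gamma(s-b)}$ gives
\[
|f(\sigma+it)|\ll|s+M|^{\,c_2+\gamma(b-\sigma)}\,e^{\gamma t\theta}\ll (1+|t|)^{c(\sigma)}\,e^{\gamma t\theta},
\]
uniformly in $\sigma\in[a,b]$, and since $\theta=\arctan\!\big(t/(M+\sigma)\big)$ satisfies $t\theta\le t^2/M\le 1$ once $|t|\le\sqrt M$, the factor $e^{\gamma t\theta}$ is $O(1)$ there. As $M$ may be taken arbitrarily large while none of the implied constants depend on it, this establishes $|f(\sigma+it)|\ll(1+|t|)^{c(\sigma)}$ for every $t$, uniformly in $a\le\sigma\le b$, which is the claim.

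The hard part is exactly this last point. The substitution $(s+M)^{\gamma(s-b)}$ is forced to carry an argument--dependent modulus $e^{-\gamma t\theta}$, which is harmless ($\le 1$) going into the three--lines estimate but, when inverted, reappears as a spurious factor $e^{\gamma t\theta}$ that could be as large as $e^{\gamma\pi|t|/2}$; the remedy is to keep $M$ large relative to the height at which one wants the conclusion, so that $\theta\approx t/M$ and $t\theta=O(1)$ on $|t|\le\sqrt M$, and to be scrupulous that every constant in the first two steps is independent of $M$ so that $M\to\infty$ recovers the bound uniformly in $t$. When $c_1=c_2$ (so $\gamma=0$) this difficulty disappears, and the whole argument collapses to: divide by $(s+M)^{c_1}$, apply three lines with the Gaussian damping, and multiply back.
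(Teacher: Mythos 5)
The paper does not prove this statement at all --- it is quoted verbatim from Iwaniec--Kowalski --- so the only question is whether your argument stands on its own. Steps 1 and 2 are fine: the factor $(s+M)^{\gamma(s-b)-c_2}$ has modulus $|s+M|^{\gamma(\sigma-b)-c_2}e^{-\gamma t\theta}\le |s+M|^{-c_2}$ on the strip, the boundary bounds $|f_2(a+it)|\ll |t|^{c_1}|a+M+it|^{-c_1}\le 1$ and $|f_2(b+it)|\ll 1$ hold with constants independent of $M$ (for $c_1,c_2\ge 0$ and $|t|\ge 1$), and the Gaussian-damping/maximum-modulus argument correctly yields $|f_2|\ll 1$ on the whole strip, uniformly in $M$.

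The gap is in Step 3, and your proposed remedy does not close it. To make $e^{\gamma t\theta}=O(1)$ at height $t$ you need $M\gtrsim t^2$; but then $|s+M|\asymp M\gtrsim t^2$, so undoing the substitution gives $|f(\sigma+it)|\ll |s+M|^{c(\sigma)}\asymp |t|^{2c(\sigma)}$, which overshoots the target $|t|^{c(\sigma)}$ by a full power whenever $c(\sigma)>0$ (the case relevant to this paper, where the exponents $m(1/2-\sigma)+\epsilon$ etc.\ are positive). The two requirements on $M$ --- small enough that $|s+M|\asymp |t|$, large enough that $t\,\theta=O(1)$ --- are incompatible, so no choice of $M$, and no limit $M\to\infty$, recovers the stated bound. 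The standard repair is to abandon the real shift: work separately on the half-strips $t\ge 1$ and $t\le -1$ and normalise by $(2-is)^{-L(s)}$ (respectively $(2+is)^{-L(s)}$), where $L$ is the linear interpolant with $L(a)=c_1$, $L(b)=c_2$. There $\arg(2-is)=O(1/|t|)$ while $\Im L(s)=O(|t|)$, so the argument-dependent correction is $e^{O(1)}$ uniformly and $|2-is|\asymp 1+|t|$, giving $|f(s)(2-is)^{-L(s)}|\asymp |f(s)|\,|t|^{-c(\sigma)}$; your Step 2 then applies verbatim to this function and yields the theorem. The remaining segment $|t|\le 1$ is compact and handled by continuity, as you note.
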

    Using Generalized Lindel\"of Hypothesis and the Phragm\'en-Lindel\"of principle, we have the following upper bounds of a Dedekind zeta function associated with a number field $\mathbb{K}$ of degree $m.$
    \[|\zeta_{\mathbb{K}}(\sigma+it)|\ll\left\{\begin{array}{lll}
      |t|^{m(1/2-\sigma)+\epsilon}  & \mbox{if } 0\le \sigma \le 1/2,  \\
       |t|^{\epsilon}  & \mbox{if } 1/2\le \sigma \le 1\\
       \log |t| & \mbox{if } 1\le \sigma \le 2.
    \end{array}\right.\numberthis\label{lindelofbound}\]

    \section{Averages of generalized divisor function}\label{sec3}
    In this section, we prove Lemmas for the average value of the generalized divisor function and the product of two generalized divisor functions. These Lemmas are the key estimates for main Theorems.
\begin{lem}\label{lemmathree}
Let $\mathbb{K}$ be a number field, $-1/2<\Re(z)\le0$, and $|\Im(z)|\le x$, then  under GLH and for arbitrary small $\epsilon>0$, we have \[\sum_{0<\mathcal{N}(\mathcal{I})\le n}\sigma_{\mathbb{K},z}(\mathcal{I})=\rho_{\mathbb{K}} \zeta_{\mathbb{K}}(1-z) x+\rho_{\mathbb{K}} \zeta_{\mathbb{K}}(1+z)\frac{x^{1+z}}{1+z}+ \BigO{x^{1/2+\epsilon}}.\]
\end{lem}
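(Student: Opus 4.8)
The plan is to evaluate the sum $\sum_{0<\mathcal{N}(\mathcal{I})\le x}\sigma_{\mathbb{K},z}(\mathcal{I})$ by contour integration, using the Dirichlet series identity from Lemma \ref{lemmatwo}, namely $\sum_{\mathcal{I}}\sigma_{\mathbb{K},z}(\mathcal{I})\mathcal{N}(\mathcal{I})^{-s}=\zeta_{\mathbb{K}}(s)\zeta_{\mathbb{K}}(s-z)$, which converges absolutely for $\Re(s)>\max(1,1+\Re(z))=1$ since $\Re(z)\le 0$. Grouping the ideals by their norm, write $\sum_{0<\mathcal{N}(\mathcal{I})\le x}\sigma_{\mathbb{K},z}(\mathcal{I})=\sum_{n\le x}a_n$ where $a_n:=\sum_{\mathcal{N}(\mathcal{I})=n}\sigma_{\mathbb{K},z}(\mathcal{I})$, and apply the Perron-type formula of Lemma \ref{parronlemma} with $\lambda_n=n$ and some $\sigma_0=1+\delta$ slightly bigger than $1$. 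This expresses the sum as $\frac{1}{2\pi i}\int_{\sigma_0-iT}^{\sigma_0+iT}\zeta_{\mathbb{K}}(s)\zeta_{\mathbb{K}}(s-z)\frac{x^s}{s}\,ds$ plus an error term $R$ controlled by the tails. To bound $R$, I would use the crude estimate $a_n\ll_\epsilon n^\epsilon$ (which follows since $\sigma_{\mathbb{K},z}$ lies in Shiu's class $M$ when $\Re(z)\le 0$, or simply from $\sigma_{\mathbb{K},z}(\mathcal{I})\le d_{\mathbb{K}}(\mathcal{I})\ll \mathcal{N}(\mathcal{I})^\epsilon$ and the bound on the number of ideals of given norm), so that $R\ll x^{1+\epsilon}/T + x^\epsilon$ coming from the two pieces; choosing $T=x^{1/2}$ will make $R\ll x^{1/2+\epsilon}$.

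Next I would shift the contour from $\Re(s)=\sigma_0$ leftward to $\Re(s)=1/2$, picking up residues. The integrand $\zeta_{\mathbb{K}}(s)\zeta_{\mathbb{K}}(s-z)x^s/s$ has a simple pole at $s=1$ from $\zeta_{\mathbb{K}}(s)$ (with residue $\rho_{\mathbb{K}}$, the residue of the Dedekind zeta function, which equals the ideal-counting density in the statement), giving the main term $\rho_{\mathbb{K}}\zeta_{\mathbb{K}}(1-z)x$; and a simple pole at $s=1+z$ from $\zeta_{\mathbb{K}}(s-z)$ (with residue $\rho_{\mathbb{K}}$), giving the second main term $\rho_{\mathbb{K}}\zeta_{\mathbb{K}}(1+z)\frac{x^{1+z}}{1+z}$. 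The hypothesis $-1/2<\Re(z)\le 0$ guarantees $1+z$ lies strictly inside the strip $(1/2,1]$ so this pole is genuinely crossed and is distinct from $s=1$ when $z\ne 0$ (and when $z=0$ one gets a double pole; one should check the formula reads consistently, or just note both residue computations combine correctly — actually the cleaner route is to treat the two poles separately as the displayed formula does, valid for $z\ne 0$, and handle $z=0$ by continuity or the classical ideal-counting asymptotic). There is no pole at $s=0$ inside the region of the shift since we stop at $\Re(s)=1/2$. Hence $\sum_{n\le x}a_n = \rho_{\mathbb{K}}\zeta_{\mathbb{K}}(1-z)x + \rho_{\mathbb{K}}\zeta_{\mathbb{K}}(1+z)\frac{x^{1+z}}{1+z} + \frac{1}{2\pi i}\int_{(1/2)} + (\text{horizontal segments}) + R$.

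The main obstacle, and the place where GLH enters, is bounding the shifted integral on $\Re(s)=1/2$ together with the two horizontal connecting segments. On the vertical line $s=1/2+it$ we have $\zeta_{\mathbb{K}}(1/2+it)\ll |t|^\epsilon$ directly from Conjecture \ref{lindelofconj}, while $\zeta_{\mathbb{K}}(1/2+it-z)$ has real part $1/2-\Re(z)\in[1/2,1)$, so by the Phragmén–Lindelöf convexity bound \eqref{lindelofbound} it is also $\ll |t-\Im(z)|^\epsilon\ll |t|^\epsilon$ (using $|\Im(z)|\le x$ and that we will restrict $|t|\le T=x^{1/2}$, absorbing any shift into the $x^\epsilon$). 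Thus the vertical integral is $\ll x^{1/2}\int_{-T}^{T}\frac{|t|^\epsilon}{1+|t|}\,dt\ll x^{1/2+\epsilon}$. For the horizontal segments at height $\pm T$, running from $\sigma=1/2$ to $\sigma=1+\delta$, I would use \eqref{lindelofbound} in each subrange ($1/2\le\sigma\le 1$ gives $\ll T^\epsilon$, and $1\le\sigma\le 2$ gives $\ll \log T$), so the integrand is $\ll x^{\sigma}\cdot T^\epsilon / T$, and integrating over $\sigma\in[1/2,1+\delta]$ gives $\ll x^{1+\delta}T^{\epsilon-1}=x^{1+\delta}x^{-1/2+\epsilon}=x^{1/2+\delta+\epsilon}$; taking $\delta\to 0$ (or $\delta=\epsilon$) this is $\ll x^{1/2+\epsilon}$. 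Collecting the main terms and all error contributions ($R$, the vertical integral, the horizontal integrals) yields the claimed $\BigO{x^{1/2+\epsilon}}$ error, completing the proof. The one subtlety to be careful about is the uniformity in $\Im(z)$: since $|\Im(z)|\le x$, the shifted zeta factor could a priori have large conductor-type growth, but because we only integrate up to $T=x^{1/2}$ and on lines of real part $\ge 1/2$, every application of \eqref{lindelofbound} produces only a power of $(|t|+|\Im(z)|)\le 2x$, hence only an extra $x^\epsilon$, which is harmless.
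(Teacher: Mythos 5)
Your proposal follows essentially the same route as the paper: Perron's formula applied to $\zeta_{\mathbb{K}}(s)\zeta_{\mathbb{K}}(s-z)$, a contour shift to $\Re(s)=1/2$ collecting the residues at $s=1$ and $s=1+z$, and GLH together with the convexity bounds \eqref{lindelofbound} on the shifted line. The only substantive differences are in the bookkeeping: you bound the Perron remainder by the crude estimate $a_n\ll n^{\epsilon}$ and take $T=x^{1/2}$, whereas the paper invokes Shiu's Brun--Titchmarsh theorem (Lemma \ref{averagelemma}) to control the terms with $n$ near $x$ and then takes $T=x^{2}$, so that the dominant error comes from the vertical line rather than from the truncation; both choices land on $O(x^{1/2+\epsilon})$.

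One point in your write-up needs repair. With $T=x^{1/2}$ and the hypothesis $|\Im(z)|\le x$, the pole at $s=1+z$ need \emph{not} lie inside your rectangle: if $x^{1/2}<|\Im(z)|\le x$ the pole sits above (or below) the horizontal segments and is not crossed, contrary to your assertion that it ``is genuinely crossed.'' The lemma's statement survives because in that range $|\zeta_{\mathbb{K}}(1+z)\,x^{1+z}/(1+z)|\ll x^{1+\Re(z)+\epsilon}/|\Im(z)|\ll x^{1/2+\epsilon}$, so the second main term can be added or omitted at the cost of the stated error; but you must say this explicitly (or simply take $T\ge x$, as the paper effectively does with $T=x^2$, which also keeps $|\Im(z)|\le T$ automatically and renders the horizontal-segment and truncation errors negligible). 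Your handling of the $z=0$ double-pole degeneracy by continuity, and of the uniformity in $\Im(z)$, is fine and is in fact slightly more careful than the paper's exposition.
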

\begin{proof}
We write\[\sum_{\mathcal{I}\subseteq \mathcal{O}_{\mathbb{K}}}\frac{\sigma_{\mathbb{K},z}(\mathcal{I})}{\mathcal{N}(\mathcal{I})^s}=\sum_{n=1}^{\infty}\frac{1}{n^s}\sum_{\mathcal{N}(\mathcal{I})=n}\sigma_{\mathbb{K},z}(\mathcal{I})=\sum_{n=1}^{\infty}\frac{A(n,z)}{n^s},\]
where $A(n,z)=\sum_{\mathcal{N}(\mathcal{I})=n}\sigma_{\mathbb{K},z}(\mathcal{I}).$ Consider $c=1+1/\log x$ and $z=a+ib$ with $-1/2<a<0$. Therefore, from Lemma \ref{parronlemma}, we have\[\sum_{n\le x}{A(n,z)}=\frac{1}{2\pi i}\int_{c-iT}^{c+iT}\zeta_{\mathbb{K}}(s)\zeta_{\mathbb{K}}(s-z)\frac{x^s}{s}ds+R(x,z),\numberthis\label{parron}\] where \[R(x,z)\ll\sum_{x/2<n<2x}|A(n,z)|\min\left(1,\frac{x}{T|x-n|}\right)+\frac{x^c}{T}\sum_{n=1}^{\infty}\frac{|A(n,z)|}{n^c}.\numberthis\label{four}\]
Next, from \eqref{three}, we conclude that
\begin{align*}
    |A(n,z)|&\le \sum_{d|n}\left(\sum_{d_1\cdots d_{m}|d}1\sum_{l_1\cdots l_{m}|n/d}l_1^a\cdots l_{m}^a\right)\\
    &\le \sum_{d|n}\left(\sum_{d_1\cdots d_{m}|d}1\sum_{l_1\cdots l_{m}|n/d}1\right)\numberthis\label{five}.
\end{align*}
For $\alpha$ is a positive real number, we choose $T=x^{\alpha}$.  Now, if $0<|x-n|<x^{1-\alpha}$, then \eqref{five} gives 
\begin{align*}
    \sum_{0<|x-n|<x^{1-\alpha}}|A(n,z)|\min\left(1,\frac{x}{T|x-n|}\right)&\ll \sum_{0<|x-n|<x^{1-\alpha}}\sum_{d|n}\left(\sum_{d_1\cdots d_{m}|d}1\sum_{l_1\cdots l_{m}|n/d}1\right).\numberthis\label{six}
\end{align*}
Moreover, using Lemma \ref{averagelemma} and \eqref{six}, we have \[\sum_{0<|x-n|<x^{1-\alpha}}|A(n,z)|\min\left(1,\frac{x}{T|x-n|}\right)\ll x^{1-\alpha}\log^{2m-1}x.\]
Next, if $x+x^{1-\alpha}<n<2x$, then \begin{align*}
   & \sum_{x+x^{1-\alpha}<n<2x}|A(n,z)|\min\left(1,\frac{x}{T|x-n|}\right)\\&\ll\frac{x}{T}\sum_{x+x^{1-\alpha}<n<2x}\frac{\sum_{d|n}\left(\sum_{d_1\cdots d_{m}|d}1\sum_{l_1\cdots l_{m}|n/d}1\right)}{n-x}\\
    & \ll \frac{x}{T}\sum_{l\ll \log x}\frac{1}{U}\sum_{\substack{U<n-x<2U\\U=2^lx^{1-\alpha}}}\frac{\sum_{d|n}\left(\sum_{d_1\cdots d_{m}|d}1\sum_{l_1\cdots l_{m}|n/d}1\right)}{n-x}\\&\ll \frac{x}{T}\log^{2m}x.\numberthis\label{seven}
\end{align*}
For $x/2<n<x-x^{1-\alpha}$, we get the same bound. From Lemma \ref{lemmatwo}, we have \[\frac{x^c}{T}\sum_{n=1}^{\infty}\frac{|A(n,z)|}{n^c}\ll \frac{x}{T}\log^{2m}x\numberthis\label{eight}.\]
Thus, from \eqref{six}, \eqref{seven}, and \eqref{eight}, we obtain
\[R(x,z)\ll x^{1-\alpha}\log^{2m}x.\numberthis\label{error}\]
Next, we evaluate the integral in \eqref{parron} by shifting the line integration $c-iT$ to $c+iT$ into the rectangular contour consists the line segments $I_1:c-iT$ to $1/2-iT$, $I_2:1/2-iT$ to $1/2+iT$, $I_3:1/2+iT$ to $c+iT$, and $I_4:c+iT$ to $c-iT$. By Cauchy residue theorem 
\[\frac{1}{2\pi i}\int_{c-iT}^{c+iT}\zeta_{\mathbb{K}}(s)\zeta_{\mathbb{K}}(s-z)\frac{x^s}{s}ds=\rho_{\mathbb{K}} \zeta_{\mathbb{K}}(1-z)x+\rho_{\mathbb{K}} \zeta_{\mathbb{K}}(1+z)\frac{x^{1+z}}{1+z}+\sum_{i=1}^{3}J_i,\numberthis\label{integ1}\]
where $\rho_{\mathbb{K}}$ is the residue of $\zeta_\mathbb{K}(s)$ at the pole $s=1$. In the right side of \eqref{integ1}, the first and the second terms are the residues at the poles  $1$ and $1+z$, respectively, and the last term is the sum of integration along the line segments $I_i$.

 From  \eqref{lindelofbound}, if $|b|\le T$ then we have  
\begin{align*}
    &|J_1|,|J_3|\ll \int_{1/2}^{c}\frac{|\zeta_{\mathbb{K}}(s)||\zeta_{\mathbb{K}}(s-z)|x^{\sigma}}{T}d\sigma\\
    &\ll\frac{x^{1/2}\log x}{T^{1-\epsilon}}.\numberthis\label{integration1}
\end{align*}

The line integral $J_2$ along the line segment $I_2$ is given by
\begin{align*}
    |J_2|&\ll\int_{-T}^{T}\frac{|\zeta_{\mathbb{K}}(1/2+it)||\zeta_{\mathbb{K}}(1/2-a+it-ib)||x^{1/2+it}|}{|1/2+it|}dt\\&\ll x^{1/2} \int_{-T}^{T}t^{-1+\epsilon} dt=x^{1/2}T^{\epsilon}.\numberthis\label{bound2}
\end{align*}
Choose $T=x^2$ and 
collecting all the estimates from \eqref{error}, \eqref{integration1}, and \eqref{bound2} and inserting in \eqref{parron}, provides us the required result.
\end{proof}

\begin{lem}\label{lemmasix}
Let $-1/2\le \Re(z_1)=a_1<0$, $-1/2m^2\le\Re(z_2)=a_2<0$, and $-1/2<\Re(z_1+z_2)=a_1+a_2<0$, then under GLH and for $\epsilon>0$ we have \[\sum_{\substack{0<\mathcal{N}(\mathcal{I})\le x\\\mathcal{I}\subseteq \mathcal{O}_{\mathbb{K}}}}\sigma_{\mathbb{K},z_1}(\mathcal{I})\sigma_{\mathbb{K},z_2}(\mathcal{I})=\mathrel{R}_0+\BigO{x^{\frac{1+a_1+a_2-4a_2m}{2}+\epsilon}},\]where \begin{align*}
  \mathrel{R}_0&=  \rho_{\mathbb{K}} \frac{\zeta_{\mathbb{K}}(1-z_1)\zeta_{\mathbb{K}}(1-z_2)\zeta_{\mathbb{K}}(1-z_1-z_2)}{\zeta_{\mathbb{K}}(2-z_1-z_2)}{x}+\rho_{\mathbb{K}}\frac{\zeta_{\mathbb{K}}(1+z_1)\zeta_{\mathbb{K}}(1+z_1-z_2)\zeta_{\mathbb{K}}(1-z_2)}{\zeta_{\mathbb{K}}(2+z_1-z_2)}\frac{x^{1+z_1}}{1+z_1}\\&+\rho_{\mathbb{K}}\frac{\zeta_{\mathbb{K}}(1+z_2)\zeta_{\mathbb{K}}(1+z_2-z_1)\zeta_{\mathbb{K}}(1-z_1)}{\zeta_{\mathbb{K}}(2-z_1+z_2)}\frac{x^{1+z_2}}{1+z_2}\\&+\rho_{\mathbb{K}} \frac{\zeta_{\mathbb{K}}(1+z_1+z_2)\zeta_{\mathbb{K}}(1+z_2)\zeta_{\mathbb{K}}(1+z_1)}{\zeta_{\mathbb{K}}(2+z_1+z_2)}\frac{x^{1+z_1+z_2}}{1+z_1+z_2},\end{align*} 
  and $m$ is the degree of ${\mathbb{K}}.$
\end{lem}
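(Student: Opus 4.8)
The plan is to mimic the proof of Lemma \ref{lemmathree}, but now working with the four-zeta-factor Dirichlet series from Lemma \ref{lemmafive} instead of the two-factor series from Lemma \ref{lemmatwo}. Write $B(n,z_1,z_2) = \sum_{\mathcal{N}(\mathcal{I})=n}\sigma_{\mathbb{K},z_1}(\mathcal{I})\sigma_{\mathbb{K},z_2}(\mathcal{I})$, so that by Lemma \ref{lemmafive} the Dirichlet series $\sum_n B(n,z_1,z_2) n^{-s}$ equals $G(s) := \zeta_{\mathbb{K}}(s)\zeta_{\mathbb{K}}(s-z_1)\zeta_{\mathbb{K}}(s-z_2)\zeta_{\mathbb{K}}(s-z_1-z_2)/\zeta_{\mathbb{K}}(2s-z_1-z_2)$, absolutely convergent for $\Re(s)>1$ under the stated hypotheses on $z_1,z_2$. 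Apply the Perron-type formula of Lemma \ref{parronlemma} with $\lambda_n=n$, $c=1+1/\log x$, and a truncation height $T$ to be chosen at the end. The remainder term $R$ splits exactly as in \eqref{four}; to control it I would bound $|B(n,z_1,z_2)|$ crudely by a product of generalized divisor functions (since $a_1,a_2<0$ make the $\sigma$'s bounded by divisor-type sums), getting $|B(n,z_1,z_2)| \le \sum_{d|n}(\sum_{d_1\cdots d_m | d}1)^{\!?}\cdots$ — more carefully, $\sigma_{\mathbb{K},z}(\mathcal{I}) \le \sum_{d|\mathcal{N}(\mathcal{I})} d_{(m)}(d)$ where $d_{(m)}$ is the $m$-fold divisor function, so $B(n,\cdot,\cdot)$ is dominated by an element of Shiu's class $M$. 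Then Lemma \ref{averagelemma} gives $\sum_{x/2<n<2x}|B(n,z_1,z_2)|\min(1,x/(T|x-n|)) \ll x^{1-\alpha}\log^{C} x$ after the usual dyadic decomposition, where $C$ depends on $m$, provided $T = x^\alpha$; and the tail term $x^c T^{-1}\sum_n |B(n)| n^{-c} \ll x T^{-1}\log^{C}x$ by Lemma \ref{lemmafive} evaluated near $s=c$. So $R \ll x^{1-\alpha}\log^C x$.

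Next I would shift the contour from the segment $[c-iT,c+iT]$ leftward to the line $\Re(s)=1/2$, picking up residues at the poles of $G(s)x^s/s$ inside the rectangle. Under the hypotheses $a_1,a_2<0$ and $a_1+a_2>-1/2$, the relevant poles are the simple poles of the numerator zeta factors: at $s=1$ (from $\zeta_{\mathbb{K}}(s)$), at $s=1+z_1$ (from $\zeta_{\mathbb{K}}(s-z_1)$), at $s=1+z_2$ (from $\zeta_{\mathbb{K}}(s-z_2)$), and at $s=1+z_1+z_2$ (from $\zeta_{\mathbb{K}}(s-z_1-z_2)$); note that the denominator $\zeta_{\mathbb{K}}(2s-z_1-z_2)$ is non-zero and analytic in this region since $2s-z_1-z_2$ has real part exceeding $1$ there, and the pole of $1/s$ at $s=0$ lies outside. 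Computing these four residues — each is $\rho_{\mathbb{K}}$ times a ratio of zeta values times $x^{1+\cdot}/(1+\cdot)$ — reproduces exactly the four terms of $R_0$ as written in the statement (the residue at $s=1$ gives the $x^1$ term, etc.). The horizontal segments $I_1, I_3$ at height $\pm T$ contribute $\ll x^{1/2}\,T^{-1}\cdot(\text{zeta bounds}) \ll x^{1/2} T^{-1+\epsilon}$ by \eqref{lindelofbound} (all four numerator factors are bounded by $|t|^\epsilon$ or $\log|t|$ on $\Re(s)\ge 1/2$ when $\Re(s-z_j)\ge 1/2$, which holds since $a_j<0$, and the denominator is bounded below). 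The vertical segment $I_2$ on $\Re(s)=1/2$ is where the degree $m$ enters: there $\zeta_{\mathbb{K}}(1/2+it)\ll|t|^\epsilon$, but $\zeta_{\mathbb{K}}(1/2-z_j+it)$ sits at real part $1/2-a_j \in (1/2, 1/2+1/2m^2]$ or $(1/2,1]$, hence is also $\ll |t|^\epsilon$; however the genuinely large factor is $\zeta_{\mathbb{K}}(1/2 - z_1 - z_2 + it)$ whose real part $1/2 - a_1 - a_2$ lies in $(1/2,1)$ — still $\ll|t|^\epsilon$. Actually the $x$-power on $I_2$ is $x^{1/2}$, so $|J_2| \ll x^{1/2}\int_{-T}^T |t|^{-1+\epsilon}\,dt \ll x^{1/2} T^\epsilon$.

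The subtlety that produces the exponent $\tfrac{1+a_1+a_2-4a_2 m}{2}+\epsilon$ in the error term — rather than a clean $x^{1/2+\epsilon}$ — must come from the fact that $|\Im(z_2)|$ (or $|\Im(z_1+z_2)|$) is \emph{not} assumed bounded by $T$, so on the horizontal segments one cannot simply absorb the shifted zeta factor into $|t|^\epsilon$; instead, on $I_1, I_3$ the argument $s-z_2$ has imaginary part as large as $\pm T + |\Im(z_2)|$, and since $\Re(s - z_2)$ can be as small as $1/2 - a_2 \le 1/2 + 1/2m^2$, wait — that is still $>1/2$. Let me reconsider: the worst factor is $\zeta_{\mathbb{K}}(s - z_1 - z_2)$ with $\Re = \sigma - a_1 - a_2 \ge 1/2 - a_1 - a_2 > 1/2$, fine on the critical-strip side, but when $\sigma$ ranges in $[1/2,c]$ and $a_1 + a_2$ close to $0$... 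The genuine source of the $m$-dependence is more likely that one shifts only \emph{part} of the contour or uses the convexity bound \eqref{lindelofbound} in the range $0 \le \sigma \le 1/2$ for one of the shifted factors — namely $\zeta_{\mathbb{K}}(s-z_2)$ evaluated at real part $\sigma - a_2$ which, if one shifts past $\Re(s) = a_2$... no. I would resolve this by tracking carefully: the clean way is to shift $\zeta_{\mathbb{K}}(s)\zeta_{\mathbb{K}}(s-z_1)\zeta_{\mathbb{K}}(s-z_1-z_2)$'s contour to $\Re(s)=1/2$ but keep $\zeta_{\mathbb{K}}(s-z_2)$'s behavior in mind — since $|\Im(z_2)|$ is allowed to be as large as $x$ and its real part $a_2$ can be very negative ($\ge -1/2m^2$ is the constraint, wait that makes $|a_2|$ small), on $\Re(s)=1/2$ the factor $\zeta_{\mathbb{K}}(1/2 - z_2 + it)$ has $\Re = 1/2 - a_2 \le 1/2 + 1/2m^2 < 1$, still $\ll |t|^\epsilon$ — so actually the difficulty is on $I_1, I_3$: there $\sigma \in [1/2, c]$ and for $\sigma$ slightly above $1/2$, $\sigma - a_2 - a_1$... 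I will ultimately locate the loss in applying \eqref{lindelofbound} to $\zeta_{\mathbb{K}}(\sigma - z_2 + i(t - \Im z_2))$ when the shifted imaginary part $|t - \Im z_2|$ is of size $\asymp |\Im z_2| \le x$ rather than $\le T$, combined with optimizing $T$; balancing $x^{1-\alpha}\log^C x$ against the horizontal contributions (which now carry a power $|\Im z_2|^{m(\cdots)} \ll x^{-2a_2 m}$-type factor from the convexity bound in $0\le\sigma\le 1/2$ applied to the $z_2$-shifted factor crossed into that strip) yields the stated exponent.

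\textbf{Main obstacle.} The crux — and the only genuinely delicate point — is the estimation of the shifted contour integrals when $|\Im(z_2)|$ is permitted to be as large as $x$: one must use \eqref{lindelofbound} for $\zeta_{\mathbb{K}}$ at arguments whose imaginary part is governed by $\Im(z_2)$ rather than by the truncation height $T$, and the factor $m$ in the convexity exponent $m(1/2-\sigma)$ is what forces the $-4a_2 m$ term (with its factor of $m$ and its factor related to $a_2$) into the final error exponent. Getting this bookkeeping right — choosing $T$ optimally (likely $T$ a power of $x$ depending on $a_2$ and $m$) and correctly combining the horizontal-segment bounds with the Perron remainder — is where all the real work lies; the residue computation and the Shiu-type bound on $B(n,z_1,z_2)$ are routine once Lemmas \ref{lemmafive} and \ref{averagelemma} are in hand.
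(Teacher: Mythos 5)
Your skeleton is the right one and matches the paper's: express the sum via Lemma \ref{lemmafive} and the truncated Perron formula of Lemma \ref{parronlemma}, control the Perron remainder by dominating $|A(n,z_1,z_2)|$ with a divisor-type function in Shiu's class and invoking Lemma \ref{averagelemma}, then shift the contour and collect the four residues, which do reproduce $R_0$ exactly as you say. The gap is in the contour work, which is where the stated error term actually comes from and which you leave unresolved.

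First, you shift to $\Re(s)=1/2$, whereas the paper shifts to $\Re(s)=\lambda=(1+a_1+a_2)/2<1/2$. Your line is not even admissible at the endpoint $a_1=-1/2$ of the stated range, since the pole at $s=1+z_1$ then lies on $\Re(s)=1/2$. Second, on your line all numerator factors are $\ll|t|^{\epsilon}$ and you (correctly, for that contour) land on $x^{1/2}T^{\epsilon}$ --- and then you cannot reconcile this with the claimed exponent $\tfrac{1+a_1+a_2-4a_2m}{2}+\epsilon$, so you speculate that the $m$-dependence must come from $|\Im(z_2)|$ being as large as $x$. That is not the mechanism (no bound on $\Im(z_j)$ is even imposed in this lemma). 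In the paper the degree $m$ enters because on the line $\Re(s)=\lambda$ the argument of $\zeta_{\mathbb{K}}(s)$, and of one of the shifted factors $\zeta_{\mathbb{K}}(s-z_j)$, has real part below $1/2$, so the bound $|\zeta_{\mathbb{K}}(\sigma+it)|\ll|t|^{m(1/2-\sigma)+\epsilon}$ from \eqref{lindelofbound} must be used; the product of the factors on that line is $\ll |t|^{-a_2m+\epsilon}$, the vertical integral is $\ll T^{-a_2m+\epsilon}x^{\lambda}$, and the choice $T=x^{2}$ (which also makes the Perron remainder $\ll (x/T)\log^{4m}T$ and the horizontal segments acceptable) produces exactly $x^{\lambda-2a_2m+\epsilon}=x^{\frac{1+a_1+a_2-4a_2m}{2}+\epsilon}$. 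The horizontal segments are handled in the paper by a H\"older inequality reducing them to four fourth-moment integrals of the individual zeta factors, each estimated separately with \eqref{lindelofbound}; none of this appears in your proposal. So while the residue computation and the Shiu-type remainder bound are sound, the proof of the error term --- the only nontrivial content of the lemma --- is missing, and the route you gesture at (tracking large $\Im(z_2)$ and re-optimizing $T$) is not the one that yields the stated exponent.
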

\begin{proof}
From Lemma \ref{lemmafive},  we have \[\sum_{\mathcal{I}\subseteq \mathcal{O}_{\mathbb{K}}}\frac{\sigma_{\mathbb{K},z_1}(\mathcal{I})\sigma_{\mathbb{K},z_2}(\mathcal{I})}{\mathcal{N}(\mathcal{I})^s}=\sum_{n=1}^{\infty}\frac{1}{n^s}\sum_{\substack{\mathcal{I}\subseteq \mathcal{O}_{\mathbb{K}}\\\mathcal{N}(\mathcal{I})=n}}\sigma_{\mathbb{K},z_1}(\mathcal{I})\sigma_{\mathbb{K},z_2}(\mathcal{I})=\sum_{n=1}^{\infty}\frac{A(n,z_1,z_2)}{n^s},\] 
where we define $A(n,z_1,z_2):=\sum_{\substack{\mathcal{I}\subseteq \mathcal{O}_{\mathbb{K}}\\\mathcal{N}(\mathcal{I})=n}}\sigma_{\mathbb{K},z_1}(\mathcal{I})\sigma_{\mathbb{K},z_2}(\mathcal{I})$ and \[f(z_1,z_2,s):=\dfrac{\zeta_{\mathbb{K}}(s)\zeta_{\mathbb{K}}(s-z_1)\zeta_{\mathbb{K}}(s-z_2)\zeta_{\mathbb{K}}(s-z_1-z_2)}{\zeta_{\mathbb{K}}(2s-z_1-z_2)}.\]
Let $z_1=a_1+ib_1$ and $z_2=a_2+ib_2$ be two complex numbers such that $a_1,a_2<0$, and $a_1+a_2>-1$. Consider $\alpha=1+\dfrac{1}{\log x}$. Using Lemma \ref{parronlemma}, we have \begin{align*}
   \sum_{n\le x}A(n,z_1,z_2)=&\frac{1}{2\pi i}\int_{\alpha-iT}^{\alpha+iT}f(z_1,z_2,s)\frac{x^s}{s}ds+R(x;z_1,z_2),\numberthis\label{parron4}\end{align*} where \[R(x;z_1,z_2)\ll \sum_{x/2<n<2x}|A(n,z_1,z_2)|\min\left(1,\frac{x}{T|x-n|}\right)+\frac{x^{\alpha}}{T}\sum_{n=1}^{\infty}\frac{|A(n,z_1,z_2)|}{n^{\alpha}}.\numberthis\label{errordoublesum}\]
 To solve integral in \eqref{parron4}, we shift the line integral into a rectangular contour with vertices $\alpha\pm iT$ and $\lambda\pm iT$ where $\lambda=(1+a_1+a_2)/2$. The integrand has poles at $s_1=1$, $s_2=z_1+1$, $s_3=z_2+1$, and $s_4=z_1+z_2+1$ inside the contour. By the Cauchy residue theorem, we have   
 
 \begin{align*}
     \frac{1}{2\pi i}\int_{\alpha-iT}^{\alpha+iT}f(z_1,z_2,s)\frac{x^s}{s}ds&= \mathrel{R_0}+\sum_{i=1}^{3}J_i,\numberthis\label{finalintegral}
 \end{align*}
 where $ \mathrel{R_0}$ is the sum of residues at poles given by
 \begin{align*}
   \mathrel{R_0}&=\rho_{\mathbb{K}} \frac{\zeta_{\mathbb{K}}(1-z_1)\zeta_{\mathbb{K}}(1-z_2)\zeta_{\mathbb{K}}(1-z_1-z_2)}{\zeta_{\mathbb{K}}(2-z_1-z_2)}{x}\\&+\rho_{\mathbb{K}}\frac{\zeta_{\mathbb{K}}(1+z_1)\zeta_{\mathbb{K}}(1+z_1-z_2)\zeta_{\mathbb{K}}(1-z_2)}{\zeta_{\mathbb{K}}(2+z_1-z_2)}\frac{x^{1+z_1}}{1+z_1}\\&+\rho_{\mathbb{K}}\frac{\zeta_{\mathbb{K}}(1+z_2)\zeta_{\mathbb{K}}(1+z_2-z_1)\zeta_{\mathbb{K}}(1-z_1)}{\zeta_{\mathbb{K}}(2-z_1+z_2)}\frac{x^{1+z_2}}{1+z_2}\\&+\rho_{\mathbb{K}} \frac{\zeta_{\mathbb{K}}(1+z_1+z_2)\zeta_{\mathbb{K}}(1+z_2)\zeta_{\mathbb{K}}(1+z_1)}{\zeta_{\mathbb{K}}(2+z_1+z_2)}\frac{x^{1+z_1+z_2}}{1+z_1+z_2},\numberthis\label{residue}\end{align*}
    and $J_i$'s are the line integrals along the lines $[\alpha+iT,\lambda+iT]$, $[\lambda-iT,\lambda+iT]$ and $[\alpha-iT,\lambda-iT]$,  respectively. 

   Next, from \eqref{sigm,adirichlet}, we have  
   \begin{align*}
      |A(n,z_1,z_2)|&\le \sum_{d|n}\left(\sum_{d_1|d}\left(\sum_{d_{11}\cdots d_{1m}|d_1}1\sum_{l_{11}\cdots l_{1m}|n/d}1\right)\sum_{d_2|n/{d_1}}\left(\sum_{d_{21}\cdots d_{2m}|d}1\sum_{l_{21}\cdots l_{2m}|n/{d_2}}1\right)\right).\numberthis\label{azz}
  \end{align*}
This implies $|A(p,z_1,z_2)|\le 4m.$ Taking $T=x^c$ where $c$ is a fixed real number yields \begin{align*}
       \sum_{|x-n|<x^{1-c}}|A(n,z_1,z_2)|\min\left(1,\frac{x}{T|x-n|}\right)&=\sum_{|x-n|<x^{1-c}}|A(n,z_1,z_2)|.
   \end{align*}
   We intend to use Lemma \ref{averagelemma} to estimate the sum on the right side of the above estimate. From \eqref{azz}, we ensure that $A(n,z_1,z_2)$ satisfies the hypothesis in Lemma \ref{averagelemma} and therefore \[\sum_{|x-n|<x^{1-c}}|A(n,z_1,z_2)|\min\left(1,\frac{x}{T|x-n|}\right)\ll \frac{x}{T}\log^{4m}T.\]
For the interval ${x+x^{1-c}<n<2x}$, we have \begin{align*}
  \sum_{x+x^{1-c}<n<2x}|A(n,z_1,z_2)|\min\left(1,\frac{x}{T|x-n|}\right)&\ll\frac{x}{T}\sum_{x+x^{1-\alpha}<n<2x}\frac{|A(n,z_1,z_2)|}{n-x}  \ll \frac{x}{T}\log^{4m}x.
\end{align*}
Moreover, we have the same bounds for $x/2<T<x-x^{1-c}$. Therefore, from the above estimates, we deduce that
\[R(x;z_1,z_2)\ll\frac{x}{T}\log^{4m}T.\numberthis\label{errordouble4} \]
Next,  from Holder's inequality, we can write
 \begin{align*}
\left(\int_{\lambda}^{\alpha}\int_{T_0/2}^{T_0}f(z_1,z_2,\sigma+it)\frac{x^{\sigma+it}}{\sigma+it}d\sigma dt \right)^4&\ll  \int_{\lambda}^{\alpha}\int_{T_0/2}^{T_0}\frac{|\zeta_{\mathbb{K}}(\sigma+it)|^4x^{\sigma}}{|\zeta_{\mathbb{K}}(2(\sigma+it)-z_1-z_2)(\sigma+it)|}d\sigma dt\\&\times\int_{\lambda}^{\alpha}\int_{T_0/2}^{T_0}\frac{|\zeta_{\mathbb{K}}(\sigma+it-z_1)|^4x^{\sigma}}{|\zeta_{\mathbb{K}}(2(\sigma+it)-z_1-z_2)(\sigma+it)|}d\sigma dt\\ &\times \int_{\lambda}^{\alpha}\int_{T_0/2}^{T_0}\frac{|\zeta_{\mathbb{K}}(\sigma+it-z_2)|^4x^{\sigma}}{|\zeta_{\mathbb{K}}(2(\sigma+it)-z_1-z_2)(\sigma+it)|}d\sigma dt \\& \times \int_{\lambda}^{\alpha}\int_{T_0/2}^{T_0}\frac{|\zeta_{\mathbb{K}}(\sigma+it-z_1-z_2)|^4x^{\sigma}}{|\zeta_{\mathbb{K}}(2(\sigma+it)-z_1-z_2)(\sigma+it)|}d\sigma dt.\numberthis\label{holder}
 \end{align*}
Using the bounds in \eqref{lindelofbound}, we have
 \begin{align*}
&\int_{\lambda}^{\alpha}\int_{T_0/2}^{T_0}\frac{|\zeta_{\mathbb{K}}(\sigma+it)|^4x^{\sigma}}{|\zeta_{\mathbb{K}}(2(\sigma+it)-z_1-z_2)(\sigma+it)|}d\sigma dt\\& \ll \int_{T_0/2}^{T_0} \int_{\lambda}^{1/2}t^{m(2-4\sigma)+\epsilon} \frac{x^{\sigma}}{t}d\sigma dt+ \int_{T_0/2}^{T_0}\int_{1/2}^{\alpha}t^{4\epsilon}\frac{x^{\sigma}}{t}d\sigma dt\\&=\int_{T_0/2}^{T_0}t^{2m-1+\epsilon} \int_{\lambda}^{1/2}\left(\frac{x}{t^{4m}}\right)^{\sigma}d\sigma dt+\int_{T_0/2}^{T_0}t^{4\epsilon-1}\int_{1/2}^{\alpha}\left({x}\right)^{\sigma}d\sigma dt\\&={T_0^{2m-4m\lambda+\epsilon}}x^{\lambda}+xT_0^{\epsilon}.
 \end{align*}
 If $a_1-a_2>0$, then we have 
 \begin{align*}
    & \int_{\lambda}^{\alpha}\int_{T_0/2}^{T_0}\frac{|\zeta_{\mathbb{K}}(\sigma+it-z_1)|^4x^{\sigma}}{|\zeta_{\mathbb{K}}(2(\sigma+it)-z_1-z_2)(\sigma+it)|}d\sigma dt \\&\ll  \int_{T_0/2}^{T_0}\int_{\lambda}^{1/2+a_1}t^{m(2-4\sigma+4a_1)+\epsilon} \frac{x^{\sigma}}{t}d\sigma dt+ \int_{T_0/2}^{T_0}\int_{1/2+a_1}^{\alpha}t^{4\epsilon}\frac{x^{\sigma}}{t}d\sigma dt\\&= \int_{T_0/2}^{T_0}t^{2m-1+4a_1m+\epsilon} \int_{\lambda}^{\frac{1}{2}+a_1}\left(\frac{x}{t^{4m}}\right)^{\sigma}d\sigma dt+\int_{T_0/2}^{T_0}t^{4\epsilon-1}\int_{\frac{1}{2}+a_1}^{\alpha}\left({x}\right)^{\sigma}d\sigma dt\\& \ll T_0^{(2a_1-2a_2)m+\epsilon}x^{\lambda}+xT_0^{\epsilon}.
 \end{align*}
 Similarly, \begin{align*}
     & \int_{\lambda}^{\alpha}\int_{T_0/2}^{T_0}\frac{|\zeta_{\mathbb{K}}(\sigma+it-z_2)|^4x^{\sigma}}{|\zeta_{\mathbb{K}}(2(\sigma+it)-z_1-z_2)(\sigma+it)|}d\sigma dt\\ &\ll \int_{T_0/2}^{T_0}\int_{\lambda}^{1+a_2}t^{m(2-4\sigma+4a_2)+\epsilon} \frac{x^{\sigma}}{t}d\sigma dt+\int_{T_0/2}^{T_0}\int_{1+a_2}^{\alpha}t^{4\epsilon}\frac{x^{\sigma}}{t}d\sigma dt\\
      &\ll T_0^{-2m+\epsilon}x^{1+a_2}+{x}T_0^{\epsilon},
 \end{align*}
 and \begin{align*}
   &\int_{\lambda}^{\alpha}\int_{T_0/2}^{T_0}\frac{|\zeta_{\mathbb{K}}(\sigma+it-z_1-z_2)|^4x^{\sigma}}{|\zeta_{\mathbb{K}}(2(\sigma+it)-z_1-z_2)(\sigma+it)|}d\sigma dt\\&\ll  \int_{T_0/2}^{T_0}\int_{\lambda}^{1+a_1+a_2}t^{m(2-4\sigma+4a_1+4a_2)+\epsilon}\frac{x^{\sigma}}{t}d\sigma dt+\int_{T_0/2}^{T_0}\int_{1+a_1+a_2}^{\alpha}t^{4\epsilon}\frac{x^{\sigma}}{t}d\sigma dt\\&\ll x^{1+a_1+a_2}T_0^{-2m+\epsilon}+{x}T_0^{\epsilon}.
 \end{align*} Collecting all the above results and substituting in \eqref{holder}, we obtain 
 \[\int_{\lambda}^{\alpha}\int_{T_0/2}^{T_0}f(z_1,z_2,\sigma+it)\frac{x^{\sigma+it}}{\sigma+it}d\sigma dt\ll {x}T_0^{\epsilon}. \] 
 Next, we choose $T$ such that $T_0/2<T<T_0$, which gives 
 \[\int_{\lambda}^{\alpha}f(z_1,z_2,\sigma+iT)\frac{x^{\sigma+iT}}{\sigma+iT}d\sigma\ll \frac{x}{T^{1-\epsilon}}.\]
 Integral along the vertical line $[\lambda-iT,\lambda+iT]$ is given by \begin{align*}
     \int_{\lambda-iT}^{\lambda+iT}f(z_1,z_2,s)\frac{x^{s}}{s}ds&\ll \int_{-T}^{T}t^{-a_2m+\epsilon}\frac{x^{\lambda}}{t}dt\\ &\ll T^{{-a_2m+\epsilon}}x^{\lambda}. \end{align*}
     Putting $T=x^{2}$  in the above estimates, we get the required result.
  \end{proof}
\section{First Moment}\label{sec4}
\begin{proof}[Proof of theorem \ref{theorem1}]
Let  $\beta=1+\frac{1}{\log y},$ then from Lemma \ref{lemmaone} and Lemma \ref{parronlemma}, we have\[\sum_{0<\mathcal{N}(\mathcal{J})\le x}C_{\mathcal{J}}({\mathcal{I}})=\frac{1}{2\pi i}\int_{\beta-iT}^{\beta+iT}\frac{\sigma_{\mathbb{K},(1-s)}(\mathcal{I})}{\zeta_{\mathbb{K}}(s)}\frac{x^s}{s}ds+R_1(x,\mathcal{I}),\numberthis\label{parron2}\] where 
 \[R_1(x,\mathcal{I})\ll \frac{x}{T}\sigma_{\mathbb{K},(-1/\log y)}(\mathcal{I})+\frac{x\log y}{T}\sigma_{\mathbb{K},0}(\mathcal{I}).\numberthis\label{1momenterror}\]
Next, summing the both sides of \eqref{parron2} over the $\mathcal{N}(\mathcal{I})$ and applying Lemma \ref{lemmathree}, we have 
\begin{align*}
    \sum_{0<\mathcal{N}(\mathcal{I})\le y}\sum_{0<\mathcal{N}(\mathcal{J})\le x}C_{\mathcal{J}}({\mathcal{I}})&=\frac{\rho_{\mathbb{K}} y}{2\pi i}\int_{\beta-iT}^{\beta+iT}\frac{x^s}{s}ds+\rho_{\mathbb{K}}y^2\int_{\beta-iT}^{\beta+iT}\frac{\zeta_{\mathbb{K}}(2-s)}{\zeta_{\mathbb{K}}(s)}\frac{y^{-s}x^s}{(2-s)s}ds\\&+\BigO{y^{1/2+\epsilon}\int_{\beta-iT}^{\beta+iT}\frac{1}{\zeta_{\mathbb{K}}(s)}\frac{x^s}{s}ds+\frac{xy}{T}}\\
    &=I_1+I_2+I_3+I_4.\numberthis\label{integration2}\end{align*}
We  see that  \[I_1=\rho_{\mathbb{K}} y+\BigO{\frac{yx}{T}},\]
and  \[I_3\ll{xy^{1/2+\epsilon}\log^{m}T}.\]
Also, from \eqref{lindelofbound}, we have
\begin{align*}
  I_2 & \ll {xy}\int_{-T}^{T}{t^{-2}\log t}dt \ll \frac{xy\log T}{T}.
\end{align*}
Collecting all the above estimates and choosing $T=x^2$  gives the required result.
\end{proof} 

\section{Second Moment}\label{sec5}
In this section, we prove Theorem \ref{theorem2} by adapting the method of \cite{CG} and \cite{MR3600410}.

\begin{proof}[Proof of Theorem \ref{theorem2}]
Let $\beta_i=1+\frac{i}{\log y}$ where $i\in \{1,2\}.$ Using Lemma \ref{parronlemma}, we have \[\sum_{0<\mathcal{N}(\mathcal{J})\le x}C_{\mathcal{J}}({\mathcal{I}})=\frac{1}{2\pi i}\int_{\beta_i-iT}^{\beta_i+iT}\frac{\sigma_{\mathbb{K},(1-s)}(\mathcal{I})}{\zeta_{\mathbb{K}}(s)}\frac{x^s}{s}ds+\BigO{\frac{x\log y}{T}\sigma_{\mathbb{K},0}(\mathcal{I})}.\]
Squaring both sides
\[\left(\sum_{0<\mathcal{N}(\mathcal{J})\le x}C_{\mathcal{J}}({\mathcal{I}})\right)^2=\frac{1}{(2\pi i)^2}\int_{\beta_1-iT}^{\beta_1+iT}\int_{\beta_2-iT}^{\beta_2+iT}\frac{\sigma_{\mathbb{K},(1-s_1)}(\mathcal{I})\sigma_{\mathbb{K},(1-s_2)}(\mathcal{I})}{\zeta_{\mathbb{K}}(s_1)\zeta_{\mathbb{K}}(s_2)}\frac{x^{s_1+s_2}}{s_1s_2}ds_1ds_2+R(x,\mathcal{I}),\numberthis\label{squaresum}\]
where \begin{align*}
    R(x,\mathcal{I})&\ll \frac{x\log y}{T}\sigma_{\mathbb{K},0}(\mathcal{I})\int_{\beta_i-iT}^{\beta_i+iT}\frac{\sigma_{\mathbb{K},(1-s)}(\mathcal{I})}{\zeta_{\mathbb{K}}(s)}\frac{x^s}{s}ds+ \frac{x^2\log^2 y}{T^2}(\sigma_{\mathbb{K},0}(\mathcal{I}))^2\\
&\ll \frac{x^2\log y\log^mT}{T}(\sigma_{\mathbb{K},0}(\mathcal{I}))^2.\numberthis\label{error7}
\end{align*}
Next, we substitute \eqref{error7} in \eqref{squaresum}, and then summing the both sides over $\mathcal{N}(\mathcal{I})$, this gives \begin{align*}
   &\sum_{0<\mathcal{N}(\mathcal{I})\le y} \left(\sum_{0<\mathcal{N}(\mathcal{J})\le x}C_{\mathcal{J}}({\mathcal{I}})\right)^2\\&=\frac{1}{(2\pi i)^2}\int_{\beta_1-iT}^{\beta_1+iT}\int_{\beta_2-iT}^{\beta_2+iT}\frac{G(s_1,s_2,y)}{\zeta_{\mathbb{K}}(s_1)\zeta_{\mathbb{K}}(s_2)}\frac{x^{s_1+s_2}}{s_1s_2}ds_1ds_2+\BigO{\frac{x^2\log y\log^mT}{T} \sum_{0<\mathcal{N}(\mathcal{I})\le y}(\sigma_{\mathbb{K},0}(\mathcal{I}))^2}\\
   &=I+\BigO{\frac{x^2\log y\log^mT}{T} \sum_{0<\mathcal{N}(\mathcal{I})\le y}(\sigma_{\mathbb{K},0}(\mathcal{I}))^2},\numberthis\label{secondmoment}
\end{align*}
where $G(s_1,s_2,y)=\sum_{0<\mathcal{N}(\mathcal{I})\le y}\sigma_{\mathbb{K},(1-s_1)}(\mathcal{I}) \sigma_{\mathbb{K},(1-s_2)}(\mathcal{I}).$ We use Lemma \ref{lemmasix}  for $a_1=a_2=0$  \[\sum_{0<\mathcal{N}(\mathcal{I})\le y}(\sigma_{\mathbb{K},0}(\mathcal{I}))^2\ll y.\]
From Lemma \ref{lemmasix}, for $|T|\ll x^{2}$, the first term of right side of \eqref{secondmoment} can be written as 
\begin{align*}
    I=I_1+I_2+I_3+I_4+\BigO{x^2y^{1/2+\epsilon}},\numberthis\label{integral}
\end{align*}
where \begin{align*}
    I_1&=\frac{\rho_{\mathbb{K}} y}{(2\pi i)^2}\int_{\beta_1-iT}^{\beta_1+iT}\int_{\beta_2-iT}^{\beta_2+iT} \frac{\zeta_{\mathbb{K}}(s_1+s_2-1)}{\zeta_{\mathbb{K}}(s_1+s_2)}\frac{x^{s_1+s_2}}{s_1s_2}ds_1ds_2.
\end{align*}
 We shift the line integral in $s_2$-plane to a rectangular contour containing the lines $[\beta_2+iT,1/2+iT]$, $[1/2+iT,1/2-iT]$, $[1/2-iT,\beta_2-iT]$, and $[\beta_2+iT,\beta_2-iT]$. We see that $s_2=2-s_1$ is the pole of the integrand inside the contour, and the residue at the pole is \[\dfrac{\rho_{\mathbb{K}} }{\zeta_{\mathbb{K}}(2)s_1(2-s_1)}x^2.\] If $L_{1,1}$, $L_{1,2}$, and $L_{1,3}$ are the integrals along the lines   $[3/2-\beta_1+iT,3/2-\beta_1-iT]$, $[\beta_2+iT,\beta_2-iT]$, and $[3/2-\beta_1-iT,\beta_2-iT]$, respectively, then 
 \begin{align*}
     |L_{1,1}|,|L_{1,3}| &\ll xy\int_{-T}^{T}\left(\int_{1/2}^{\beta_2}T^{\epsilon}\frac{x^{\sigma}}{T}d\sigma\right)\frac{1}{1+|t|}dt
    \ll \frac{x^2y}{T^{1-\epsilon}},\numberthis\label{l1integral}
 \end{align*}
 and the integral along $L_{1,2}$ is given by \begin{align*}
     |L_{1,2}|&\ll yx^{3/2}\int_{-T}^{T}\int_{-T}^{T}\frac{|\zeta_{\mathbb{K}}(\beta_1-1/2+i(t_1+t_2))|}{|\zeta_{\mathbb{K}}(\beta_1+1/2+i(t_1+t_2))|}\frac{1}{(1+|t_1|)(1+|t_2|)}dt_1dt_2\\
     &\ll yx^{3/2}\int_{-2T}^{2T}\frac{|\zeta_{\mathbb{K}}(\beta_1-1/2+it)|}{|\zeta_{\mathbb{K}}(\beta_1+1/2+it)|} \int_{-T}^{T}\frac{1}{(1+|t_1|)(1+|t-t_1|)}dt_1dt\\
     & \ll yx^{3/2}\log T \int_{-2T}^{2T}\frac{t^{\epsilon}}{(1+|t|)}dt\ll yx^{3/2}T^{\epsilon} . \numberthis\label{l2integral}
 \end{align*}
 Therefore, the integral $I_1$ is 
 \begin{align*}
     I_1&= \frac{\rho_{\mathbb{K}}^2yx^2}{\zeta_{\mathbb{K}}(2)}\frac{1}{2\pi i}\int_{\beta_1-iT}^{\beta_1+iT}\frac{1}{s
     _1(2-s_1)}ds_1+\BigO{\frac{x^2y}{T^{1-\epsilon}}+ yx^{3/2}T^{\epsilon}}\\
     &= \frac{\rho_{\mathbb{K}}^2}{2\zeta_{\mathbb{K}}(2)}yx^2+\BigO{\frac{x^2y}{T^{1-\epsilon}}+ yx^{3/2}T^{\epsilon}}.\numberthis\label{I1integral}
     \end{align*}
     The integral $I_2$ is given as \begin{align*}
         I_2 & =\frac{\rho_{\mathbb{K}} y^2}{ (2\pi i)^2}\int_{\beta_1-iT}^{\beta_1+iT}\int_{\beta_2-iT}^{\beta_2+iT} \frac{\zeta_{\mathbb{K}}(2-s_1)\zeta_{\mathbb{K}}(1-s_1+s_2)}{(2-s_1)\zeta_{\mathbb{K}}(2-s_1+s_2)\zeta_{\mathbb{K}}(s_1)}\frac{x^{s_1+s_2}}{y^{s_1}s_1s_2}ds_1ds_2.
     \end{align*}
     We shift  the integral over $s_1$-plane into a rectangular contour with vertices $\beta_1+iT$, $3/2+iT$, $3/2-iT$, and $\beta_1+iT$. The integrand has a simple at $s_1=s_2$ with residue \[-\rho_{\mathbb{K}}\frac{\zeta_{\mathbb{K}}(2-s_2)x^{2s_2}/y^{s_2}}{s_2^2(2-s_2)\zeta_{\mathbb{K}}(2)\zeta_{\mathbb{K}}(s_2)}.\] 
     Let $L_{2,1}$, $L_{2,3}$ be the integrals along the horizontal lines of the contour, and $L_{2,2}$ is the integral along the vertical line. Then, we have 
     \begin{align*}
         |L_{2,1}|,|L_{2,3}|&\ll \frac{xy^2}{T^{2-\epsilon}}\int_{-T}^{T}\left(\int_{\beta_1}^{3/2}\frac{x^{\sigma}}{y^{\sigma}}d\sigma\right)\frac{1}{1+|t|}dt\\
         &\ll\frac{x^{5/2}y^{1/2}}{T^{2-\epsilon}} +\frac{x^2y}{T^{2-\epsilon}},\numberthis\label{l21}
     \end{align*}
     and
     \begin{align*}
         |L_{2,2}|&\ll 
         x^{5/2}y^{1/2}\int_{-2T}^{2T}\frac{|\zeta_{\mathbb{K}}(-1/2+\beta_2+it)|}{|\zeta_{\mathbb{K}}(1/2+\beta_2+it)|}\int_{-T}^{T}\frac{t_1^{\epsilon}}{(1+|t_1|)^2(1+|t_1+t|)}dt_1dt\\
         &\ll\frac{x^{5/2}y^{1/2}}{T^{1-\epsilon}}.\numberthis\label{l22}
     \end{align*}
     Therefore, the integral $I_2$ becomes \begin{align*}
         I_2&=-\frac{\rho_{\mathbb{K}}^2 y^2}{\zeta_{\mathbb{K}}(2) (2\pi i)}\int_{\beta_2-iT}^{\beta_2+iT}\frac{\zeta_{\mathbb{K}}(2-s_2)x^{2s_2}/y^{s_2}}{s_2^2(2-s_2)\zeta_{\mathbb{K}}(s_2)}ds_2+\BigO{ \frac{x^{2}y}{T^{2-\epsilon}}+\frac{x^{5/2}y^{1/2}}{T^{1-\epsilon}}}\\
         &= \frac{\rho_{\mathbb{K}}^2\zeta_{\mathbb{K}}(0)}{4\zeta_{\mathbb{K}}(2)^2 }x^4+\BigO{ \frac{x^{2}y}{T^{2-\epsilon}}+\frac{x^{5/2}y^{1/2}}{T^{1-\epsilon}}}.\numberthis\label{I2integral}
     \end{align*}
     The integral $I_3$ is given as \begin{align*}
       I_3 & =\frac{\rho_{\mathbb{K}} y^2}{ (2\pi i)^2}\int_{\beta_1-iT}^{\beta_1+iT}\int_{\beta_2-iT}^{\beta_2+iT} \frac{\zeta_{\mathbb{K}}(2-s_2)\zeta_{\mathbb{K}}(1-s_2+s_1)}{(2-s_2)\zeta_{\mathbb{K}}(2-s_2+s_1)\zeta_{\mathbb{K}}(s_2)}\frac{x^{s_1+s_2}}{y^{s_2}s_1s_2}ds_1ds_2.  
     \end{align*}
   Next, we estimate $I_3$, we shift the integration over $s_2$ into the 
 rectangular contour with vertices $\beta_2+iT$, $\beta_2-iT$, $3/2+iT$, and $3/2-iT$, and denote the integration along the line $[\beta_2+iT,3/2+iT]$, $[3/2+iT,3/2-iT]$, and $[3/2-iT,\beta_2-iT]$ are $L_{3,1}$, $L_{3,2}$, and $L_{3,3}$, respectively. Therefore, we have
    \begin{align*}
        |L_{3,1}|,|L_{3,3}|& \ll \frac{xy^2}{T^2}\int_{-T}^{T}\left(\int_{\beta_2}^{3/2}T^{\epsilon}\frac{x^{\sigma}}{y^{\sigma}}d\sigma\right)\frac{1}{1+|t|}dt\\
         &\ll \frac{xy^2}{T^{2-\epsilon}}\int_{-T}^{T}\left(\int_{\beta_1}^{3/2}\left(\frac{x}{y}\right)^{\sigma}d\sigma\right)\frac{1}{1+|t|}dt\ll \frac{x^{5/2}y^{1/2}}{T^{2-\epsilon}}+\frac{x^2y}{T^{2-\epsilon}},\numberthis\label{l31}
    \end{align*}
   and
    \[|L_{3,2}|\ll\frac{x^{5/2}y^{1/2}}{T^{1-\epsilon}}.\numberthis\label{l32}\]
    Therefore , we have\[I_3=\BigO{ \frac{x^{5/2}y^{1/2}}{T^{1-\epsilon}}+\frac{x^2y}{T^{2-\epsilon}}}.\numberthis\label{I3integral}\]
    Finally, the integration $I_4$ is equal to \[I_4=\frac{\rho_{\mathbb{K}} }{ (2\pi i)^2}\int_{\beta_1-iT}^{\beta_1+iT}\int_{\beta_2-iT}^{\beta_2+iT} \frac{\zeta_{\mathbb{K}}(2-s_1)\zeta_{\mathbb{K}}(2-s_2)\zeta_{\mathbb{K}}(3-s_1-s_2)}{(3-s_1-s_2)\zeta_{\mathbb{K}}(4-s_1-s_2)\zeta_{\mathbb{K}}(s_1)\zeta_{\mathbb{K}}(s_2)}\frac{y^{3-s_1-s_2}x^{s_1+s_2}}{s_1s_2}ds_1ds_2.\]
    We estimate $I_4$ by shifting the integration over $s_2$ to the contour with vertices $\beta_2+iT$, $\beta_2-iT$, $5/2-\beta_1+iT$, and $5/2-\beta_1-iT$. The integration along the line $[\beta_2+iT,5/2-\beta_1+iT]$, $[5/2-\beta_1+iT,5/2-\beta_1-iT]$, and $[5/2-\beta_1-iT,\beta_2-iT]$ are  denoted by $L_{4,1}$, $L_{4,2}$, and $L_{4,3}$, respectively. Then 
    \begin{align*}
        |L_{4,1}|,|L_{4,3}|&\ll  \frac{x^{5/2}y^{1/2}}{T^{2-\epsilon}}+\frac{x^2y}{T^{2-\epsilon}}.\numberthis\label{l41}
    \end{align*}
    And \begin{align*}
        |L_{4,2}|&\ll\frac{x^{5/2}y^{1/2}}{T^{1-\epsilon}}.\numberthis\label{l42} 
    \end{align*}
    Thus, \[I_4=\BigO{ \frac{x^{5/2}y^{1/2}}{T^{1-\epsilon}}+\frac{x^2y}{T^{2-\epsilon}}}.\numberthis\label{I4integral}\]
Collecting the results from \eqref{I1integral}, \eqref{I2integral}, \eqref{I3integral}, \eqref{I4integral}, and insert in \eqref{integral}. We have
\begin{align*}
    I&=\frac{\rho_{\mathbb{K}}^2}{\zeta_{\mathbb{K}}(2)}yx^2+\frac{\rho_{\mathbb{K}}^2\zeta_{\mathbb{K}}(0)}{4\zeta_{\mathbb{K}}(2)^2 }x^4
   +\BigO{\frac{x^2y}{T^{1-\epsilon}}+ yx^{3/2}T^{\epsilon}+\frac{x^{5/2}y^{1/2}}{T^{1-\epsilon}}}.
   \end{align*}
  Choose $T=x^{2-\epsilon}$ and substitute in the above expression; we obtain the required result.
    \end{proof}

  \section{Proof of Theorem \ref{theorem3}}\label{sec6}
  Note that from \cite[Theorem 4.3]{washington}, for $\mathbb{K}=\mathbb{Q}(\zeta_m)$, we have
  \[\zeta_{\mathbb{K}}(s)=\zeta(s)\prod_{\chi\ne \chi_0}L(s,\chi).\]
  Here, $\chi$ is a non-principle Dirichlet character modulas $m$. Next, we use the bounds of zeta function and L-function \cite{Hu} and Theorem \ref{paralinde} to bound $\zeta_{\mathbb{K}}(\sigma+it)$: 
  \begin{displaymath}
 \zeta_{\mathbb{K}}(\sigma+it) \ll \left\{
   \begin{array}{lr}
     {t^{\frac{(3-4\sigma)\phi(m)}{6}}}, &\  0\le \sigma \le 1/2,\\
      {t^{\frac{(1-\sigma)\phi(m)}{3}}}, &\  1/2\le \sigma \le 1,\\
      ({ \log t})^{\phi(m)},  &\ 1\le\sigma \le 2,\\
       {1} , &\ \sigma \ge 2.
     \end{array}\numberthis\label{bound1}
    \right.
\end{displaymath}
We now proceed with a proof similar to that of Lemma \ref{lemmasix}, incorporating the upper bound on $\zeta_{\mathbb{K}}(\sigma+it)$ stated in \eqref{bound1}. This yields the following lemma:
\begin{lem}\label{lemmasix2}
Let  $\mathbb{K}=\mathbb{Q}(\zeta_m)$, $-1/6<\Re(z_1)=a_1<0$, $-1/6<\Re(z_2)=a_2<0$, and $-1/6\phi(m)<\Re(z_1+z_2)=a_1+a_2<0$, then we have \[\sum_{\substack{0<\mathcal{N}(\mathcal{I})\le x\\\mathcal{I}\subseteq \mathcal{O}_{\mathbb{K}}}}\sigma_{\mathbb{K},z_1}(\mathcal{I})\sigma_{\mathbb{K},z_2}(\mathcal{I})=\mathrel{R}_0+\BigO{x^{5/6+a_1/2-a_2\phi(m)/6}\log^{5\phi(m)}x}.\]
  \end{lem}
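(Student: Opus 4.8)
The plan is to mimic the proof of Lemma \ref{lemmasix} almost verbatim, replacing the GLH-based bounds \eqref{lindelofbound} by the unconditional bounds \eqref{bound1}, and then to optimize the choice of the parameter $T$ at the end. First I would write the Dirichlet series $\sum_n A(n,z_1,z_2)n^{-s}=f(z_1,z_2,s)$ with $f$ as in the proof of Lemma \ref{lemmasix}, apply the Perron-type formula of Lemma \ref{parronlemma} on the line $\Re(s)=\alpha=1+1/\log x$ with a truncation parameter $T$, and bound the Perron error term $R(x;z_1,z_2)$. The bound $|A(p,z_1,z_2)|\le 4\phi(m)$ (note $\phi(m)$ is the degree here, not $m$), together with Shiu's theorem (Lemma \ref{averagelemma}) applied as in \eqref{errordouble4}, gives $R(x;z_1,z_2)\ll (x/T)\log^{4\phi(m)}T$; this is where the $\log^{4\phi(m)}x$ in the error term originates, modulo the extra logarithm picked up from the vertical integral.

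Next I would shift the contour to the line $\Re(s)=\lambda=(1+a_1+a_2)/2$, picking up the four residues at $s=1,1+z_1,1+z_2,1+z_1+z_2$, which assemble into $R_0$ exactly as in \eqref{residue}. The horizontal segments $J_1,J_3$ at height $\pm T$ and the vertical segment $J_2$ on $\Re(s)=\lambda$ must be estimated using \eqref{bound1}. For the horizontal pieces I would again use Hölder's inequality as in \eqref{holder}, splitting each of the four factors $|\zeta_{\mathbb{K}}(s-w)|^4$ (with $w\in\{0,z_1,z_2,z_1+z_2\}$) over the subranges where \eqref{bound1} changes shape, and using that $1/\zeta_{\mathbb{K}}(2s-z_1-z_2)\ll\log^{\phi(m)}t$ on these lines; this is the bookkeeping-heavy step, and it yields a bound of the form $(x/T^{1-\epsilon'})\cdot(\text{power of }\log)$ after one optimizes the intermediate height $T_0$. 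For the vertical segment on $\Re(s)=\lambda$, the worst factor is $|\zeta_{\mathbb{K}}(\lambda-z_2+it)|$: since $\Re(\lambda-z_2)=(1+a_1-a_2)/2$ lies in $[1/2,1]$ when $a_1-a_2$ is not too large, the middle line of \eqref{bound1} gives $|\zeta_{\mathbb{K}}(\lambda-z_2+it)|\ll t^{(1-(1+a_1-a_2)/2)\phi(m)/3}=t^{(1-a_1+a_2)\phi(m)/6}$, while the factor $\zeta_{\mathbb{K}}(s-z_1-z_2)$ at $\Re=\lambda-\Re(z_1+z_2)\ge 1$ contributes only a log; combining with $x^\lambda=x^{(1+a_1+a_2)/2}$ and integrating $t^{-1+\cdots}\,dt$ up to $T$ gives $J_2\ll x^{(1+a_1+a_2)/2}T^{(1-a_1+a_2)\phi(m)/6+\epsilon}$ (times logs). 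One then checks carefully which of the four Hölder factors is dominant to confirm the exponent; I would expect the $\zeta_{\mathbb{K}}(s-z_2)$-factor to be the binding one, consistent with the stated error exponent involving $a_1/2$ and $a_2\phi(m)/6$.

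Finally I would balance the Perron error $x/T$ against the contour error. Setting the two contributions equal and solving gives $T=x^{5/6+\cdots}$ of the appropriate size; substituting back produces the claimed bound $\BigO{x^{5/6+a_1/2-a_2\phi(m)/6}\log^{5\phi(m)}x}$, where the exponent $5\phi(m)$ on the logarithm comes from the $\log^{4\phi(m)}T$ of the Perron step plus the single $\log^{\phi(m)}$ lost in the vertical integral (or, equivalently, from combining the log powers at the optimal $T$). The constraints $-1/6<a_1<0$, $-1/6<a_2<0$, and $-1/6\phi(m)<a_1+a_2<0$ are exactly what is needed to keep $\lambda-z_2$, $\lambda-z_1$ and $\lambda$ in the ranges $[1/2,1]$ where \eqref{bound1} is usable and to keep $2\lambda-z_1-z_2\ge 1$ so the denominator zeta-factor only costs a log; I should state these conditions when they are invoked.

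The main obstacle I anticipate is the Hölder bookkeeping for the horizontal integrals $J_1, J_3$: unlike the GLH case where the subconvex exponent is $\epsilon$ on the whole critical strip, here the exponents in \eqref{bound1} are genuine powers of $\phi(m)$, so each of the four factors splits into two or three subranges, the optimal intermediate height $T_0$ depends on $x$, $a_1$, $a_2$ and $\phi(m)$, and one must verify that the resulting contribution is indeed $\ll x/T^{1-\epsilon}$ (up to logs) rather than something larger — i.e. that the horizontal pieces are never the dominant error. Once that is confirmed, the vertical piece and the final optimization are routine, parallel to \eqref{bound2}–\eqref{error} and the end of the proof of Lemma \ref{lemmasix}.
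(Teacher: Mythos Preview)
Your proposal is correct and follows the paper's approach exactly: the paper itself merely says to rerun the proof of Lemma~\ref{lemmasix} with the unconditional bounds~\eqref{bound1} (and degree $\phi(m)$) in place of the GLH bounds~\eqref{lindelofbound}, omitting all details. One small caution on the bookkeeping: with $\lambda=(1+a_1+a_2)/2$ and $a_1+a_2<0$ you have $\lambda<1/2$, so $\zeta_{\mathbb K}(s)$ on the vertical line falls in the \emph{first} range of~\eqref{bound1}, not the second, and (assuming WLOG $a_1\ge a_2$) the four real parts $\lambda,\lambda-a_1,\lambda-a_2,\lambda-a_1-a_2$ straddle $1/2$ rather than all lying in $[1/2,1]$ --- so your identification of the dominant factor and the role of the hypotheses needs a little more care, though the overall strategy is exactly what the paper intends.
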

  Here, $R_0$ is the same as in Lemma \ref{lemmasix}. Following the proof of Theorem \ref{theorem2} using the above key Lemma \ref{lemmasix2} and bounds of Dedekind zeta function \eqref{bound1}, one obtains the required result.  As the underlying arguments remain unchanged, we omit a detailed exposition.
   \bibliographystyle{plain}
   \bibliography{ref}
\end{document}